\newtheoremstyle{note}{12pt}{12pt}{}{}{\bfseries}{.}{.5em}{}
\title{\LARGE\textbf{A Phase Transition for Circle Maps with a Flat Spot and Different Critical Exponents}}
\author{ Liviana Palmisano and Bertuel Tangue}
\newtheorem{theo}[equation]{Theorem}
\newtheorem{prop}[equation]{Proposition}
\newtheorem{defin}[equation]{Definition}
\newtheorem{rem}[equation]{Remark}
\newtheorem{cor}[equation]{Corollary}
\numberwithin{equation}{section}
\newtheorem{lem}[equation]{Lemma}
\newcommand{\N}{{\mathbb N}}
\newcommand{\Z}{{\mathbb Z}}
\newcommand{\R}{{\mathbb R}}
\renewcommand{\S}{{\mathbb S}^1}
\newcommand{\Cd}{{{\mathcal C}^2}}
\newcommand{\Cuno}{{{\mathcal C}^1}}
\renewcommand{\L}{{\mathscr L}}
\newcommand{\W}{{\mathscr W}}
\begin{document}
\maketitle
\author
\textcolor{blue}{}\global\long\def\sbr#1{\left[#1\right] }
\textcolor{blue}{}\global\long\def\cbr#1{\left\{  #1\right\}  }
\textcolor{blue}{}\global\long\def\rbr#1{\left(#1\right)}
\textcolor{blue}{}\global\long\def\ev#1{\mathbb{E}{#1}}
\textcolor{blue}{}\global\long\def\R{\mathbb{R}}
\textcolor{blue}{}\global\long\def\E{\mathbb{E}}
\textcolor{blue}{}\global\long\def\norm#1#2#3{\Vert#1\Vert_{#2}^{#3}}
\textcolor{blue}{}\global\long\def\pr#1{\mathbb{P}\rbr{#1}}
\textcolor{blue}{}\global\long\def\qq{\mathbb{Q}}
\textcolor{blue}{}\global\long\def\aa{\mathbb{A}}
\textcolor{blue}{}\global\long\def\ind#1{1_{#1}}
\textcolor{blue}{}\global\long\def\pp{\mathbb{P}}
\textcolor{blue}{}\global\long\def\cleq{\lesssim}
\textcolor{blue}{}\global\long\def\ceq{\eqsim}
\textcolor{blue}{}\global\long\def\Var#1{\text{Var}(#1)}
\textcolor{blue}{}\global\long\def\TDD#1{{\color{red}To\, Do(#1)}}
\textcolor{blue}{}\global\long\def\dd#1{\textnormal{d}#1}
\textcolor{blue}{}\global\long\def\eqdef{:=}
\textcolor{blue}{}\global\long\def\ddp#1#2{\left\langle #1,#2\right\rangle }
\textcolor{blue}{}\global\long\def\En{\mathcal{E}_{n}}
\textcolor{blue}{}\global\long\def\Z{\mathbb{Z}}
\textcolor{blue}{{} }

\textcolor{blue}{}\global\long\def\nC#1{\newconstant{#1}}
\textcolor{blue}{}\global\long\def\C#1{\useconstant{#1}}
\textcolor{blue}{}\global\long\def\nC#1{\newconstant{#1}\text{nC}_{#1}}
\textcolor{blue}{}\global\long\def\C#1{C_{#1}}
\textcolor{blue}{}\global\long\def\meas{\mathcal{M}}
\textcolor{blue}{}\global\long\def\cSpace{\mathcal{C}}
\textcolor{blue}{}\global\long\def\pspace{\mathcal{P}}

\begin{abstract} 
We study circle maps with a flat interval where the critical exponents at the two boundary points of the flat spot might be different. 
The space of such systems is partitioned in two connected parts whose common boundary only depends on the critical exponents. At this boundary there is a phase transition in the geometry of the system. 
Differently from the previous approaches, this is achieved by studying the asymptotical behavior of the renormalization operator. 
\end{abstract}

\section{Introduction}
The dynamics of circle maps with a flat interval has been intensively explored in the past years, see \cite{5aut, MartensPalmisano, P2, P3, TangermanVeermanI, TangermanVeermanII, Veerman}.
Among many reasons to study these maps, they appear as first return map of a special flow on the torus, called Cherry flow, see \cite{MartensdeMeloMendesvanStrien, Mendes, MoreiraGaspar, P1, P4, P5}.
Because of their connection with Cherry flows, the maps considered in the above cited papers have, near both boundary points of the flat interval, the form of $x^{\ell}$, where $\ell$ is a positive real number and it is called the critical exponent of the map. Moreover, in \cite{Graczyk}, the dynamics of maps with different critical exponents at the two boundary points of the flat interval is studied. More specifically, the author considers the special case of maps having one critical exponent equal to one and the other strictly larger than one. The dynamics of these systems has been essential in the study of bimodal circle maps, see for example \cite{CrovisierGuarinoPalmisano}.
 
We consider here the general case of maps with a flat interval and critical exponents  not necessarily equal.  In our context, the exponents can have all real values starting in one. The space of our maps, denoted by $\W$ contains circle maps with a flat piece, different critical exponents $\ell_1,\ell_2\ge 1$ and Fibonacci rotation number. See Subsection \ref{renormalization} for a precise definition. 
 
 As in \cite{Graczyk}, \cite{5aut} and \cite{ P1} we are interested in the study of the geometry of the non-wandering set of the system. This is the set obtained by removing from the circle all preimages of the flat interval and it turns out to be a Cantor set, see (\ref{Def:nonwandering}). 
 The small scale geometry of the map near to the boundary points of the flat interval gives global information on the geometry of the non-wandering set. Differently from the previous works, information on the geometry of the system near the flat interval is obtained by the study of the asymptotical behavior of the renormalization operator. One can think of the renormalization operator as a microscope. We cut off a neighborhood around the image of the flat piece and we look at the first return map to this neighborhood. After rescaling and flipping, the first return map is again a map of our class. This process, that informally describes the action of the renormalization operator, asymptotically, gives information on the small scale geometry of our systems near the flat interval. The action of the renormalization operator on our class of maps is explained in Subsection \ref{renormalization}. 
 
We denote by $w_n$ the quadruple of the relevant scaling ratios describing the $n^{th}$ renormalization of a map. Its asymptotical behavior is described by a $4\times 4$ matrix having eigenvalues $\lambda_u>0$, $\lambda_s\in(0,1)$, $\lambda_1=1$ and $\lambda_0=0$. The matrix, and its eigenvalues, only depend on the critical exponents. In particular, expressing the vector $w_n$ of scaling ratios of consecutive renormalizations in the basis of the four eigenvectors $E_u$, $E_s$, $E_1$, $E_0$ one has, for all $n\in\N$, the coordinates $C_u(n)$, $C_s(n)$, $C_1(n)$, $C_0(n)$. The coordinate $C_u(n)$, related to the eigenvalue $\lambda_u$, effects the asymptotical behavior of the renormalization operator and as consequence, the geometry of the system. We say that the sequence of renormalizations is bounded if the  $\limsup_{n\to\infty}C_u(n)$ is finite and it is degenerate if the $\limsup_{n\to\infty}C_u(n)$ is infinite, see Definition \ref{def:boundeddegenerate}. 

Furthermore, there exists a curve $\Gamma$, defined by the equation $\lambda_u(\ell_1, \ell_2)=1$, which separates the $\ell_1,\ell_2$-plane in two connected components, $Q_-$ and $Q_+$. We detect a change of the geometry of the system while crossing $\Gamma$. This is presented in our main theorem, see Theorem \ref{Theo:maintheo} and summarized in the following. 
 
\paragraph{Theorem A.}
Let $f\in\W$ then the following holds.
\begin{itemize}
\item[-] If $\ell_1,\ell_2\in Q_+$ then the sequence of renormalizations is bounded.
\item[-] If $\ell_1,\ell_2\in Q_-$ then either the sequence  of renormalizations is bounded or it is degenerate. In particular, if $C_u(0)$ is large enough then the sequence of renormalizations is degenerate.
\end{itemize}

\bigskip

In the space of maps whose renormalizations have first coefficient of the unstable eigenvector large enough, namely $C_u(0)>>1$, we detect a phase transition in the geometry of the system from degenerate to bounded. The transition occurs along a curve depending only on the two critical exponents. We would like to stress that maps with this property are very easy to realize: it suffices to start with a large enough flat interval.
Moreover, in one of the two connected components, we also detect a dichotomy in the geometry which can be compared with the one found for Lorenz maps in \cite{MartensWinckler}.

Observe that our result contains also the case of maps with the same critical exponents for which the same questions has been studied in \cite{5aut}. In that context, a transition in the geometry of the system is found when the critical exponent crosses $2$ with no further assumptions. Our class contains also maps with one critical exponent equal to one and the other one larger than one, studied in \cite{Graczyk} and for which the degenerate geometry always holds. Supported by the above cited cases, we expect that the dichotomy  in $Q_-$ reduces to degeneracy only.

The consequences of the asymptotical behavior of the renormalizations on the geometry of the system are explained in Theorem \ref{Prop:consequencesgeometry}. When the renormalizations are bounded, the Hausdorff dimension of the non wandering set is strictly positive. While, when the renormalizations are degenerate, the Hausdorff dimension of the non-wandering set is zero. 
Moreover when the renormalizations are degenerate one can define the following quantity
$$
G_u(f):=\lim\frac{C_u(n)}{\lambda_u^n}\in\R
$$
and obtain a very explicit expression describing the divergence of the renormalizations. This is summarized in the following theorem. For more details refer to Theorem \ref{Prop:consequencesgeometry}. 

\paragraph{Theorem B.} Let $f\in\W$.
If the sequence of renormalizations is bounded, then 
\begin{itemize}
\item[-]$|w_n|$ is bounded,
\item[-] the non-wandering set has strictly positive Hausdorff dimension.
\end{itemize}
If the sequence of renormalizations is degenerate, then $\lambda_u\ge 1$. In particular, if $\lambda_u>1$ then  
\begin{itemize}
\item[-]$w_{2n}=G_u(f)\lambda_u^n\left(E_u+o(1)\right)$,
\item[-] $G_u(f)<0$,
\item[-] the non-wandering set has zero Hausdorff dimension.
\end{itemize}

\bigskip

As final remark we would like to stress that our discussion gives a method to study the geometry of the attractor of a system. This is achieved by the study of the asymptotics of the renormalization operator.  We believe that such a method can be applied in very general and different contexts, such as circle maps with discontinuities, Fibonacci unimodal maps, Lorenz maps, etc. where one can allow these maps to have different critical exponents. Quadratic unimodal Fibonacci maps have been successfully studied in \cite{LyubichMilnor}. For period doubling unimodal maps with different critical exponents recent results were obtained in 
 \cite{KozlovskivanStrien}.

\paragraph {Standing notation.} Let $\alpha_n$ and $\beta_n$ be two sequences of positive numbers. We say that  $\alpha_n$ is of the order of $\beta_n$ if there exists an uniform constant $K>0$ such that  $\alpha_n\le K \beta_n$. We will use the notation $\alpha_n=O(\beta_n).$
Moreover we denote by $[a, b] = [b, a]$ the shortest interval
between $a$ and $b$ regardless of the order of these two points. The length of that interval
in the natural metric will be denoted by $\left|[a , b]\right|$. 

\paragraph{Acknowledgements.}
The first author is supported by the Trygger Foundation. The second author is supported by the Centre d'Excellence Africain en Science Mathématiques et Applications (CEA-SMA).

\section{Class of Maps and Renormalization}
\label{section:renorm}
In this section we introduce the dynamical systems of interest, namely  circle maps with a flat interval which has possibly different critical exponents at the boundary points. Furthermore, we describe the action of the renormalization operator on such a class of maps.
\subsection{The class of maps}\label{ourfunctions}
We fix two real positive numbers $\ell_1,\ell_2\ge 1$ and we denote by $\Sigma^{(X)}$ the simplex
\begin{equation*}
\Sigma^{(X)}=\{(x_1,x_2,x_3,x_4,s)\in\mathbb R^5 | x_1<0<x_3<x_4<1, 0<x_2<1\text{ and } 0<s<1 \}
\end{equation*}
and by $\text{ Diff }^2([0,1])$ the space of $\Cd$, orientation preserving diffeomorphisms of $[0,1]$.
The space of circle maps with a flat interval and different exponents at the boundary points, is described by 
$$\L^{(X)}=\Sigma^{(X)}\times \text{Diff }^2([0,1])\times \text{Diff }^2([0,1])\times \text{Diff }^2([0,1]).$$ 
The space $\L^{(X)}$ is equipped by a distance which is the sum of the usual distances: the euclidian distance on $\Sigma^{(X)}$ 
and the sum of the $\Cd$ distance, $|\cdot |_{\Cd}$, on $\text{Diff }^2([0,1])$.

 A point $f=(x_1,x_2,x_3,x_4, s,\varphi,\varphi^{l},\varphi^{r})\in \L^{(X)}$ represents the following interval map $f:[x_1,1]\to[x_1,1]$ defined by

\begin{center}
\begin{equation}\label{eqfun}
f(x):=\left\{ 
    \begin{array}{ll}
   
      (1-x_2)q_s\circ\varphi\left(1-\frac{x}{x_1}\right)+x_2 & \text{ if } x\in[x_1,0) \\
      
      x_1\left(\varphi^{l}\left(\frac{x_3-x}{x_3}\right)\right)^{\ell_1} & \text{ if } x\in[0,x_3] \\
       
       0 & \text{ if } x\in(x_3,x_4)\\
       
       x_2\left(\varphi^{r}\left(\frac{x-x_4}{1-x_4}\right)\right)^{\ell_2} & \text{ if } x\in[x_4,1] \\ 
      
    \end{array}
\right.
\end{equation}
\end{center}
where $q_s:[0,1]\to [0,1]$ is a diffeomorphic part of $x^{\ell_2}$ parametrized by $s\in (0,1)$, namely $$q_s(x)=\frac{\left[(1-s)x+s\right]^{\ell_2}-s^{\ell_2}}{1-s^{\ell_2}}.$$
The real numbers $\ell_1,\ell_2\ge 1$ are called the critical exponents of $f$. The role of $q_s$ will become clear in the study of the asymptotical behavior of renormalization, see Section \ref{asymrenormalization}. In particular, when the consecutive renormalizations of a map diverge then the renormalizations will develop in the limit a critical point at $x=x_1$. In this case the coordinate $s$ will tend to zero. The reader can keep in mind Figure \ref{Fig1}.

\begin{figure}[h]
\label{Fig1}
\centering
\includegraphics[width=0.6\textwidth]{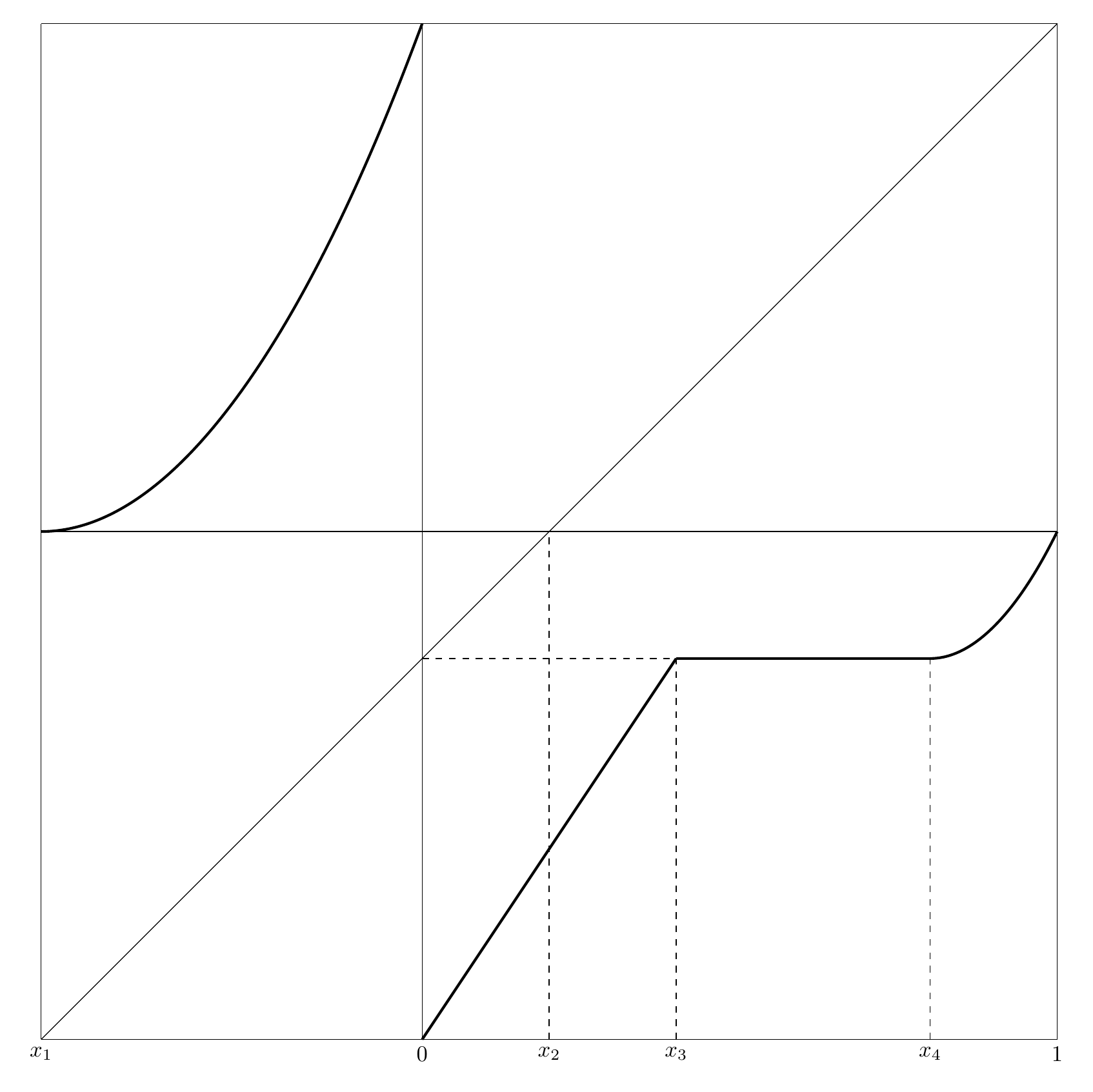}
\caption{A function in $\L^{(X)}$}
\end{figure}

Depending on the situation we will use different coordinate systems. Given a system $f=(x_1,x_2,x_3,x_4, s, \varphi,\varphi^{l},\varphi^{r})\in \L^{(X)}$, we will represent it in $S-$coordinates as follows: $f=(S_1,S_2,S_3,S_4, S_5, \varphi,\varphi^{l},\varphi^{r})$ where

  \begin{eqnarray*}
  S_1=\frac{\left|\left[x_2,x_3\right]\right|}{\left|\left[0,x_3\right]\right|}=\frac{x_3-x_2}{x_3},\text{  }  S_2=\frac{\left|\left[x_4,1\right]\right|}{\left|\left[x_2,1\right]\right|}=\frac{1-x_4}{1-x_2}, \text{  } S_3=\frac{\left|\left[0,x_3\right]\right|}{\left|\left[x_4,1\right]\right|}=\frac{x_3}{1-x_4},
  \end{eqnarray*}
   \begin{eqnarray*} 
  S_4=\frac{\left|\left[0, x_2\right]\right|}{\left|\left[x_1,0\right]\right|}=\frac{x_2}{-x_1},\text{  } S_5=s^{\ell_2-1}.
 \end{eqnarray*}
As a consequence we define 
\begin{equation*}
\Sigma^{(S)}=\{(S_1,S_2,S_3,S_4,S_5)\in\mathbb R^5 | 0<S_2,S_5<1 \text{ and } 0<S_3, S_4 \}
\end{equation*}
and 
$$\L^{(S)}=\Sigma^{(S)}\times \text{Diff }^2([0,1])\times \text{Diff }^2([0,1])\times \text{Diff }^2([0,1]).$$ 
Similarly, given a system $f=(S_1,S_2,S_3,S_4, S_5, \varphi,\varphi^{l},\varphi^{r})\in \L^{(S)}$ we will represent it in $Y-$coordinates as follows: $f=(y_1,y_2,y_3,y_4, y_5, \varphi,\varphi^{l},\varphi^{r})$ where
$$
  \begin{matrix}
 y_1=S_1, & y_2=\log S_2, & y_3=\log S_3, & y_4=\log S_4, & y_5=\log S_5
\end{matrix}
$$
Also in this case we define 
\begin{equation*}
\Sigma^{(Y)}=\{(y_1,y_2,y_3,y_4,y_5)\in\mathbb R^5 |  y_2, y_5<0 \}
\end{equation*}
and 
$$\L^{(Y)}=\Sigma^{(Y)}\times \text{Diff }^2([0,1])\times \text{Diff }^2([0,1])\times \text{Diff }^2([0,1]).$$ 

Observe that these changes of coordinates induce diffeomorphisms between $\L^{(X)}$, $\L^{(S)}$ and $\L^{(Y)}$. In particular, by explicit calculations the following lemma holds. 
\begin{lem}\label{xtos}
The inverse of $(x_1, x_2, x_3, x_4)\to (S_1, S_2, S_3, S_4)$ is given by
\begin{enumerate}
 \item $x_1=-\frac{S_3(1-S_1)S_2}{(1+S_3(1-S_1)S_2)S_4}$,
 \item $x_2=\frac{S_3(1-S_1)S_2}{1+S_3(1-S_1)S_2}$,
 \item $x_3=\frac{S_3S_2}{1+S_3(1-S_1)S_2}$,
  \item $x_4=1-\frac{S_2}{1+S_3(1-S_1)S_2}$.
\end{enumerate}
\end{lem}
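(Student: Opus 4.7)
The plan is a direct algebraic inversion of the system
$$S_1=\frac{x_3-x_2}{x_3},\qquad S_2=\frac{1-x_4}{1-x_2},\qquad S_3=\frac{x_3}{1-x_4},\qquad S_4=\frac{x_2}{-x_1}.$$
No dynamical input is needed; the assertion is purely formal. The main task is only to pick a substitution order that reduces the four coupled equations to a single linear equation in one unknown.

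First I would use $S_1$ and $S_3$ to eliminate $x_3$. From $S_1=(x_3-x_2)/x_3$ one reads $x_2=(1-S_1)x_3$, and from $S_3=x_3/(1-x_4)$ one gets $x_3=S_3(1-x_4)$, whence
$$x_2=S_3(1-S_1)(1-x_4).$$
Next I would eliminate $x_4$ via $S_2$: from $S_2=(1-x_4)/(1-x_2)$ one has $1-x_4=S_2(1-x_2)$, so the preceding identity becomes
$$x_2=S_3(1-S_1)S_2\,(1-x_2).$$
Setting $A\eqdef S_3(1-S_1)S_2$ (this is precisely the recurrent combination appearing in the four target formulas), this is a linear equation in $x_2$ with unique solution $x_2=A/(1+A)$, which is formula~(2).

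From this single identity the other three formulas drop out in one line each. Since $1-x_2=1/(1+A)$, the relation $1-x_4=S_2(1-x_2)$ gives $1-x_4=S_2/(1+A)$, i.e.\ formula~(4). Then $x_3=S_3(1-x_4)=S_3 S_2/(1+A)$ is formula~(3). Finally $S_4=x_2/(-x_1)$ yields $x_1=-x_2/S_4=-A/\bigl((1+A)S_4\bigr)$, which is formula~(1).

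There is no genuine obstacle; the only thing to be careful about is the order of eliminations and to verify that $1+A\ne 0$, which is automatic since $A>0$ on the admissible domain where $S_2,S_3>0$ and $S_1<1$ (the latter being equivalent to $x_2<x_3$ in the original coordinates). Hence the map is well-defined and the four formulas describe the inverse explicitly.
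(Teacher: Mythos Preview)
Your proof is correct and is exactly the explicit computation the paper alludes to (the paper states only that the lemma follows ``by explicit calculations'' and gives no further argument). One tiny slip in your final remark: $S_1<1$ is equivalent to $x_2>0$ (not to $x_2<x_3$), since $S_1=(x_3-x_2)/x_3<1\Leftrightarrow x_2>0$; either way $1-S_1>0$ holds on $\Sigma^{(X)}$, so $A>0$ and your argument goes through unchanged.
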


From the context and the notation it will be clear which parametrization of our space we are using. The space will then be simply denoted by $\L$ instead of $\L^{(X)}$, $\L^{(S)}$ or $\L^{(Y)}$.

\subsection{Renormalization}\label{renormalization}
In this section we are going to define the renormalization operator. The renormalization scheme that we are going to use is adapted to study circle maps with Fibonacci rotation number, see Subsection \ref{Fib}. For basic concepts concerning circle maps see \cite{deMelovanStrien}.
\begin{defin}
A map $f\in\L$ is renormalizable if $0<x_2<x_3$. The space of renormalizable maps will be denoted by $\L_0$.
\end{defin}
Let $f\in\L_0$ and let $\text{pre}R(f)$ be the first return map of $f$ to the interval $[x_1,x_2]$. Let us consider the function $h:[x_1,x_2]\to[0,1]$ 
defined as $h(x)=\frac{x}{x_1}$ for all $x\in [x_1,x_2]$. Then the function 
\begin{equation*}
Rf:=h\circ \text{pre}R(f)\circ h^{-1}
\end{equation*}
is again a map of $\L$. Notice that $Rf$ is nothing else than the first return map of $f$ to the interval $[x_1,x_2]$ rescaled and flipped. This defines the renormalization operator 
$$R:\L_0\to \L.$$ 
\begin{defin}
A map $f\in\L$ is $\infty$-renormalizable if for every $n\geq 0$, $R^n f\in\L_0$. The set of $\infty$-renormalizable functions will be denoted by $\mathscr W\subset\L$. The maps in $\W$ are called Fibonacci maps. 
\end{defin}
\begin{rem}
Observe that, if $f\in\W$, by identifying $x_1$ with $1$ we obtain a map of the circle having Fibonacci rotation number.
\end{rem}
We also introduce, for $\delta>0$, the subset $\W_{\delta}$ of $\W$ consisting of  the Fibonacci maps with bounded diffeomorphisms, namely
$$
\W_{\delta}=\left\{f\in\W\left|\right. \left|\varphi\right|_{\Cd}+\left|\varphi^l\right|_{\Cd}+\left|\varphi^r\right|_{\Cd}<\delta\right\}.
$$
In Definition \ref{zoomop} we introduce the concept of the zoom operator needed later to describe the action of the renormalization operator on the space of diffeomorphisms.
\begin{defin}\label{zoomop}
Let $I=[a,b]\subset[0,1]$. The \emph{zoom} operator $Z_{I}:\text{ Diff }^0([0,1])\to \text {Diff }^0([0,1])$ is defined as
$$
Z_{I}\varphi(x)=\frac{\varphi((b-a)x+a)-\tilde a}{\tilde b-\tilde a}
$$
where $\varphi\in \text{ Diff }^0([0,1])$, $x\in[0,1]$, $\tilde a=\varphi(a)$ and $\tilde b=\varphi(b)$.
\end{defin}
 The following two lemmas are a direct consequence of the definition of the renormalization operator.
\begin{lem}\label{changexs}
Let $f=(x_1,x_2,x_3,x_4, s, \varphi,\varphi^{l},\varphi^{r})\in \L_0$ and let\\
$Rf=(\tilde x_1,\tilde x_2,\tilde x_3, \tilde x_4, \tilde s, \tilde \varphi,\tilde\varphi^{l},\tilde\varphi^{r})$. Then
\begin{eqnarray*}
 1.&\tilde x_{1}&=\frac{x_2}{x_1}\\
 2.&\tilde x_{2}&=\left(\varphi^{l}\left(\frac{x_3-x_2}{x_3}\right)\right)^{\ell_1}\\
 3.&\tilde x_3&=1-\left(\varphi^{-1}\circ q_s^{-1}\right)\left(\frac{x_4-x_2}{1-x_2}\right)\\
  4.&\tilde x_4&=1-\left(\varphi^{-1}\circ q_s^{-1}\right)\left(\frac{x_3-x_2}{1-x_2}\right)\\
  5.&\tilde s&=\varphi^{l}\left(\frac{x_3-x_2}{x_3}\right)\\
 6.&\tilde \varphi &=Z_{\left[\frac{x_3-x_2}{x_3},1\right]}\varphi^{l}\\
   7.& \tilde\varphi^{l}&=\varphi^{r}\circ Z_{\left[1-\tilde x_3,1\right]}\left( q_s\circ \varphi\right)\\
   8.&\tilde \varphi^{r}&=Z_{\left[0, \frac{x_3-x_{2}}{x_3}\right]}\varphi^{l}\circ Z_{\left[0,1-\tilde x_4\right]} \left(q_s\circ \varphi\right)
 \end{eqnarray*}
\end{lem}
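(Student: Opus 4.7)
The plan is to unfold the definition $Rf = h \circ \text{pre}R(f) \circ h^{-1}$ with $h(x)=x/x_1$, and then read off each coordinate of $Rf$ by matching pieces of the resulting formula against the template (\ref{eqfun}). The first step will be to pin down the combinatorics of the first return map to $[x_1,x_2]$. I would observe that the renormalizability condition $0<x_2<x_3$ forces $f([0,x_2])\subseteq[x_1,0]\subset[x_1,x_2]$, because on $[0,x_3]$ the map $f$ is decreasing with $f(0)=x_1$ and $f(x_3)=0$. So on $[0,x_2]$ one has $\text{pre}R(f)=f$. On the other half $[x_1,0)$, one has $f(x)\in[x_2,1]$, which lies outside $[x_1,x_2]$; but the image interval is then partitioned by $\{x_3,x_4\}$ into three pieces that are sent by $f$ into $[x_1,0]$, $\{0\}$, and $[0,x_2]$ respectively. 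Hence on $[x_1,0)$ one has $\text{pre}R(f)=f\circ f$, and in all three sub-cases the second iterate lands in $[x_1,x_2]$.

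Next I would derive the five numerical coordinates. Item (1) is immediate, since $\tilde x_1=h(x_2)=x_2/x_1$. Item (2) follows from evaluating $Rf$ at the right endpoint: $Rf(1)=h(\text{pre}R(f)(x_1))=h(f(x_2))$, and inserting the formula for $f$ on $[0,x_3]$ gives $\tilde x_2=(\varphi^l((x_3-x_2)/x_3))^{\ell_1}$. Items (3) and (4) come from locating the new flat interval: it is $h$ of the preimage of $[x_3,x_4]$ under the branch of $f$ on $[x_1,0]$. Inverting $f(x)=(1-x_2)q_s(\varphi(1-x/x_1))+x_2$ yields explicit formulas for these preimages, and the orientation-reversal of $h$ swaps their order — this is exactly how $q_s^{-1}$ and $\varphi^{-1}$ appear in (3) and (4). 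Item (5) is obtained by matching the shape of $Rf$ on $[\tilde x_1,0]$: one already knows this branch is $y\mapsto(\varphi^l((x_3-x_1 y)/x_3))^{\ell_1}$, and comparing with the required template $(1-\tilde x_2)q_{\tilde s}(\tilde\varphi(1-y/\tilde x_1))+\tilde x_2$ at the critical endpoint forces $\tilde s=\varphi^l((x_3-x_2)/x_3)$.

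The three diffeomorphism coordinates (6)--(8) are then determined by the same matching principle, combined with the bookkeeping encoded in the zoom operator of Definition \ref{zoomop}. For $\tilde\varphi$ (item 6), the branch of $Rf$ on $[\tilde x_1,0]$ has already been identified as a rescaled restriction of $\varphi^l$; peeling off the power $\ell_1$ and the affine normalization gives exactly $Z_{[(x_3-x_2)/x_3,1]}\varphi^l$. For $\tilde\varphi^l$ (item 7), one uses that $Rf$ on $[0,\tilde x_3]$ is the image under $h$ of $f\circ f$ restricted to the appropriate subinterval of $[x_1,0]$, where the inner $f$ is the lift by $q_s\circ\varphi$ and the outer $f$ is the $\ell_2$-branch through $\varphi^r$; zooming each factor onto $[0,1]$ produces the stated composition. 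The argument for $\tilde\varphi^r$ (item 8) is analogous, now using the branch of $f$ through $\varphi^l$ on the outside and the $\ell_1$-exponent power on the inside.

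None of the individual steps is deep; the real task is to execute the bookkeeping without arithmetic mistakes, since each formula conceals an orientation reversal (because $h$ flips the interval) and an implicit swap of the critical exponents $\ell_1\leftrightarrow\ell_2$ induced by this flip. I would therefore organize the proof by first writing $\text{pre}R(f)$ piecewise as an explicit formula on $[x_1,0)$ and on $[0,x_2]$, then conjugating each piece by $h$, and finally normalizing each piece to the template (\ref{eqfun}) in turn; at that stage the eight identities fall out by inspection, and the only genuine obstacle is making sure the affine rescalings implicit in each zoom are correctly identified with the factors $\tilde x_3$, $1-\tilde x_3$, $(x_3-x_2)/x_3$, $1-\tilde x_4$ appearing on the right-hand sides.
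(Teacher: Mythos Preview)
Your proposal is correct and matches the paper's approach: the paper states that this lemma (together with Lemma~\ref{ss}) is ``a direct consequence of the definition of the renormalization operator'' and gives no further argument, so the unfolding you describe---computing $\text{pre}R(f)$ piecewise, conjugating by $h$, and matching against the template (\ref{eqfun})---is exactly what is intended. Your identification of the combinatorics (first return is $f$ on $[0,x_2]$ and $f\circ f$ on $[x_1,0)$), the orientation reversal under $h$, and the induced $\ell_1\leftrightarrow\ell_2$ swap are all on point; the only care needed, as you note, is in the affine bookkeeping behind each zoom.
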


\begin{lem}\label{ss}
Consider a map $f=(S_1,S_2,S_3,S_4, S_5, \varphi,\varphi^{l},\varphi^{r})\in \L_0$ and its renormalization
$Rf=(\tilde S_1, \tilde S_2, \tilde S_3, \tilde S_4, \tilde S_5, \tilde \varphi,\tilde\varphi^{l},\tilde\varphi^{r})$. Then
\begin{eqnarray*}
1.&\tilde S_{1}&=1-\left(\frac{\ell_2 S_{1}^{\ell_1}}{S_{2}}\right)\cdot\left[\frac{S_2}{1-\left(\varphi^{-1}\circ q_s^{-1}\right)\left(1- S_2\right)}\cdot\frac{\left(\varphi^{l}\left( S_1\right)\right)^{\ell_1}}{\ell_2 S_{1}^{\ell_1}}\right]\\
2.&  \tilde S_{2}&=\frac{ S_{1}S_{2}S_{3}}{\ell_2 S_{5}}\cdot\left[\frac{\ell_2 S_5 \left(\varphi^{-1}\circ q_s^{-1}\right)\left(S_{1}S_{2}S_{3}\right)}{S_{1}S_{2}S_{3}}\cdot\frac{1}{1-\left(\varphi^{l}\left( S_1\right)\right)^{\ell_1}}\right]\\
3.&  \tilde S_{3}&=\frac{S_{5}}{ S_{1}S_{3}}\cdot\left[\frac{S_{1}S_{3} \left(1-\left(\varphi^{-1}\circ q_s^{-1}\right)\left(1-S_{2}\right)\right)}{S_5 \left(\varphi^{-1}\circ q_s^{-1}\right)\left(S_{1}S_{2}S_{3}\right)}\right]\\
 4.& \tilde S_{4}&=\frac{S_{1}^{\ell_1}}{S_{4}}\cdot\left[\left(\frac{\varphi^{l}\left( S_1\right)}{S_{1}}\right)^{\ell_1}\right]\\
5.&  \tilde S_{5}&=S_{1}^{\ell_1-1}\cdot\left[\left(\frac{\varphi^{l}\left( S_1\right)}{S_{1}}\right)^{\ell_1 -1}\right]\\
6.&\tilde \varphi &=Z_{\left[S_1,1\right]}\varphi^{l}\\
7.& \tilde\varphi^{l}&=\varphi^{r}\circ Z_{\left[q_s^{-1}\left(1-S_{2}\right),1\right]}\left( q_s\right)\circ Z_{\left[\varphi^{-1}\circ q_s^{-1}\left(1-S_{2}\right),1\right]}\left( \varphi\right)\\
8.& \tilde\varphi^{r}&=Z_{\left[0, S_1\right]}\left(\varphi^{l}\right)\circ Z_{\left[0,q_s^{-1}\left(S_1S_{2}S_3\right)\right]} \left(q_s\right)\circ Z_{\left[0,\varphi^{-1}\circ q_s^{-1}\left(S_1S_{2}S_3\right)\right]} \left(\varphi\right)
\end{eqnarray*}
\end{lem}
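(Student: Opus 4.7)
The plan is to deduce Lemma \ref{ss} directly from Lemma \ref{changexs} by the change of coordinates recorded in Lemma \ref{xtos} and by the relation $s=S_5^{1/(\ell_2-1)}$. For each of the coordinates $\tilde S_i$, $i=1,\dots,5$, I would start from its definition, namely
$$\tilde S_1=\frac{\tilde x_3-\tilde x_2}{\tilde x_3},\quad \tilde S_2=\frac{1-\tilde x_4}{1-\tilde x_2},\quad \tilde S_3=\frac{\tilde x_3}{1-\tilde x_4},\quad \tilde S_4=\frac{\tilde x_2}{-\tilde x_1},\quad \tilde S_5=\tilde s^{\,\ell_2-1},$$
substitute the formulas for $\tilde x_1,\tilde x_2,\tilde x_3,\tilde x_4,\tilde s$ from Lemma \ref{changexs}, and then eliminate the remaining $x_j$'s using Lemma \ref{xtos}. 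The resulting expressions depend only on $S_1,\dots,S_5$ together with the diffeomorphisms $\varphi,\varphi^l,\varphi^r$, which is the form claimed.

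The second step is purely algebraic: collect the answer in the factored form ``leading term times bracket''. The leading term is the value the expression takes when the three diffeomorphisms are replaced by the identity, and the bracket collects all the diffeomorphism-dependent corrections; in particular each bracket is identically equal to $1$ whenever $\varphi=\varphi^l=\varphi^r=\mathrm{id}$, which provides a quick sanity check on each of items 1--5. Concretely, the easiest coordinate is $\tilde S_4$: from $\tilde x_1=x_2/x_1$ and $\tilde x_2=(\varphi^l(S_1))^{\ell_1}$ one gets directly $\tilde S_4=(\varphi^l(S_1))^{\ell_1}\cdot(-x_1/x_2)=\frac{S_1^{\ell_1}}{S_4}\bigl(\varphi^l(S_1)/S_1\bigr)^{\ell_1}$, and $\tilde S_5$ follows similarly; the coordinates $\tilde S_1,\tilde S_2,\tilde S_3$ require one to rewrite $(x_3-x_2)/x_3$, $(x_4-x_2)/(1-x_2)$ and $(x_3-x_2)/(1-x_2)$ in terms of $S_1,S_2,S_3$ (one verifies $(x_3-x_2)/x_3=S_1$, $(x_4-x_2)/(1-x_2)=1-S_2$, $(x_3-x_2)/(1-x_2)=S_1S_2S_3$ using Lemma \ref{xtos}) and then to regroup the resulting fractions, which is exactly the cancellation that produces the advertised brackets.

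For items 6--8 I would apply the zoom operator formulas of Lemma \ref{changexs} and translate the endpoints of the relevant intervals from $x$-coordinates to $S$-coordinates. The endpoint $(x_3-x_2)/x_3=S_1$ immediately gives $\tilde\varphi=Z_{[S_1,1]}\varphi^l$, as in item 6. For $\tilde\varphi^l$ and $\tilde\varphi^r$ one must expand the composition $\varphi^{-1}\circ q_s^{-1}$ that appears in the $x$-coordinate formulas of Lemma \ref{changexs} into two successive zoom operators, one over the interval determined by $q_s^{-1}$ and one over its preimage under $\varphi$, producing the three-factor compositions in items 7 and 8; the new endpoints $q_s^{-1}(1-S_2)$, $\varphi^{-1}\circ q_s^{-1}(1-S_2)$, $q_s^{-1}(S_1S_2S_3)$ and $\varphi^{-1}\circ q_s^{-1}(S_1S_2S_3)$ come from $1-\tilde x_3$ and $1-\tilde x_4$ in Lemma \ref{changexs} after applying Lemma \ref{xtos}.

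The main obstacle is not conceptual but organizational: the explicit expressions for $\tilde x_3$ and $\tilde x_4$ involve the composition $\varphi^{-1}\circ q_s^{-1}$, and propagating it through the five ratios requires careful bookkeeping so that the final answer factors cleanly into the stated leading term times a bracket that reduces to $1$ in the pure power-law case. I would carry out the computations one coordinate at a time, each time verifying the identity-diffeomorphism limit as an internal consistency check before moving on.
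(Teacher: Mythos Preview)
Your plan is correct and matches the paper's treatment: the paper gives no separate proof, stating both Lemma \ref{changexs} and Lemma \ref{ss} as ``a direct consequence of the definition of the renormalization operator,'' and your route---substitute the $\tilde x_i$ from Lemma \ref{changexs} into the definitions of the $\tilde S_i$ and rewrite the arguments $(x_3-x_2)/x_3=S_1$, $(x_4-x_2)/(1-x_2)=1-S_2$, $(x_3-x_2)/(1-x_2)=S_1S_2S_3$ via Lemma \ref{xtos}---is exactly the intended computation.

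One small correction to your heuristic: the sanity check ``bracket $=1$ when $\varphi=\varphi^l=\varphi^r=\mathrm{id}$'' holds only for items 4 and 5. In items 1--3 the map $q_s$ is still present in the bracket (it is not one of the three diffeomorphisms), so the bracket does not collapse to $1$ under that substitution. The point of the factorization is rather that each bracket is bounded above and below (this is what is exploited in Proposition \ref{ssn}), not that it trivializes; keep this in mind so the failed check on items 1--3 does not mislead you into thinking there is an algebraic error.
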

\subsection{Fibonacci rotation number}\label{Fib}

Let $U_f=[x_3, x_4]$ be the flat interval of $f$. Observe that $R^nf_{|[0,1]}$ is a rescaled version of $f^{q_n}$ where the sequence $\left(q_n\right)_{n\in\N}$ is the Fibonacci sequence satisfying: $q_1=1$, $q_2=2$ and for all $n\geq 3$, $q_n=q_{n-1}+q_{n-2}$.

 Moreover if $R^nf=(x_{1,n},x_{2,n},x_{3,n},x_{4,n}, s_n,\varphi_{n},\varphi_{n}^l,\varphi_{n}^r)$ then the points $x_{1,n},x_{2,n},x_{3,n},x_{4,n}$ correspond to dynamical points of the original function $f$. Namely, 
\begin{itemize}
\item[-] $\hat x_{1,n}=f^{q_n+1}(x_3)=f^{q_n+1}(x_4)=f^{q_n}(0)$,
\item[-]  $\hat x_{2,n}=f^{q_{n+1}+1}(x_3)=f^{q_{n+1}+1}(x_4)=f^{q_{n+1}}(0)$,
\item[-]  $\hat x_{3,n}=f^{-q_{n}+1}(x_3)$,
\item[-]  $\hat x_{4,n}=f^{-q_{n}+1}(x_4)$.
\end{itemize}
\begin{prop}\label{Prop:gapsgotozero}
The sequence ${\left|\left[0,\hat x_{2,n}\right]\right|}$ tends to zero at least exponentially fast. 
\end{prop}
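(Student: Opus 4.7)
The plan is to express $|[0,\hat{x}_{2,n}]|$ explicitly as a product of the scaling factors coming from successive renormalizations, and then to bound this product using the constraints imposed by $f\in\W$.

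First I would observe that $\hat{x}_{2,n}=f^{q_{n+1}}(0)=\hat{x}_{1,n+1}$, so it suffices to estimate $|\hat{x}_{1,n+1}|$. The $k$-th renormalization rescales by $h_k(x)=x/x_{1,k}$, which fixes $0$ and flips orientation, while $R^kf(0)=x_{1,k}$ follows directly from \eqref{eqfun} applied to $R^kf$ (since $\varphi^l_k(1)=1$). Unfolding the $n+1$ rescalings yields the telescoping identity
\[
|[0,\hat{x}_{2,n}]| = \prod_{k=0}^{n}|x_{1,k}|,
\]
which I would verify at $n=0$ (where $\hat{x}_{2,0}=f(0)=x_1$) and prove by induction on $n$.

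Next, by Lemma~\ref{changexs}(1), $x_{1,k+1}=x_{2,k}/x_{1,k}$, so $|x_{1,k}|\,|x_{1,k+1}|=x_{2,k}$. Pairing consecutive factors of the product gives, for $n=2m+1$,
\[
\prod_{k=0}^{2m+1}|x_{1,k}| = \prod_{j=0}^{m} x_{2,2j},
\]
with a similar expression (plus one unpaired factor) for even $n$. Since $f\in\W$ forces $R^kf\in\L_0$ for every $k$, each factor satisfies $x_{2,2j}<x_{3,2j}<x_{4,2j}<1$, so the whole product is strictly less than~$1$.

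Exponential decay follows once I can find $\rho<1$ such that $x_{2,k}\le\rho$ for all $k$ sufficiently large. Heuristically, if $x_{2,k}\to 1$ then also $x_{3,k},x_{4,k}\to 1$, collapsing the flat interval against the right endpoint of the domain of $R^kf$ and obstructing the pre-image structure required for further renormalization; so such a scenario must be incompatible with $f\in\W$. Given this uniform bound, the identity above yields $|[0,\hat{x}_{2,2m+1}]|\le\rho^{m+1}$ and analogously for even $n$, which is the claimed exponential decay with rate at least $-\tfrac{1}{2}\log\rho$ per step.

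The main obstacle is producing the uniform upper bound $x_{2,k}\le\rho<1$. A priori one could imagine an $f\in\W$ along whose renormalization orbit $x_{2,k}\to 1$, and excluding this will require either a compactness argument on the sequence $\{R^kf\}$ (using the $\Cd$ bounds on the diffeomorphic components) or a direct distortion/cross-ratio estimate exploiting the Fibonacci closest-return structure. Everything else is routine bookkeeping from Lemma~\ref{changexs}.
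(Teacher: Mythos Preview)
Your telescoping identity $|[0,\hat{x}_{2,n}]|=\prod_{k=0}^{n}|x_{1,k}|$ and the pairing $|x_{1,k}|\,|x_{1,k+1}|=x_{2,k}$ via Lemma~\ref{changexs}(1) are correct and pleasant, but the reduction leaves essentially the whole content of the proposition in the step you flag as the ``main obstacle''. Knowing only that each $x_{2,k}<1$ gives no decay at all; you need $\limsup_k x_{2,k}<1$, and your heuristic (that $x_{2,k}\to 1$ would obstruct further renormalization) is not a proof: nothing you have written rules out $x_{2,k}\uparrow 1$ slowly enough that every $R^kf$ stays in $\L_0$ while $\prod_j x_{2,2j}$ fails to decay exponentially.

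Of the two routes you propose to close the gap, the compactness one is not available as stated: Proposition~\ref{Prop:gapsgotozero} is formulated for $f\in\W$, not for $f\in\W_\delta$, so you have no a priori $\Cd$ bound on $(\varphi_k,\varphi_k^l,\varphi_k^r)$ along the renormalization orbit to feed into a compactness argument. The paper itself gives no self-contained proof; it defers to Proposition~1 of \cite{5aut}, emphasising that the argument there is purely combinatorial (disjointness of the closest-return intervals under the Fibonacci structure) and does not involve the critical exponents or any distortion control. That combinatorial argument works directly on the nested intervals $[0,\hat{x}_{2,n}]$ in the original coordinates and yields a fixed contraction between successive scales; your passage through renormalization coordinates, while correct, is a detour that does not shorten this step.
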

\begin{cor}\label{Cor:flatlonger}
Fix $\delta>0$. Then for all $f\in\W_{\delta}$ and $n\in\N$,
$$
\frac{\left|\left[U, f^{-1}\left(\hat x_{2,n}\right)\right]\right|}{\left|U\right|}=\frac{\left|\left[x_3, f^{-1}\left(\hat x_{2,n}\right)\right]\right|}{\left|[x_3,x_4]\right|}=O(1)
$$
where the constant in the order depends only on $\delta$.
\end{cor}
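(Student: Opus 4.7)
Since $f(x_3) = f(x_4) = 0$, one has
\[
f^{-1}(\hat x_{2,n}) \;=\; f^{q_{n+1}-1}(0) \;=\; f^{q_{n+1}}(x_3) \;=\; f^{q_{n+1}}(x_4),
\]
so the preimage in question is just the $q_{n+1}$-th iterate of either endpoint of the flat interval. Assume without loss of generality that $\hat x_{2,n}\leq 0$, so the relevant preimage lies in $[0,x_3]$ adjacent to $x_3$ (the case $\hat x_{2,n}\geq 0$ is symmetric, with $x_4$, $\varphi^r$, $\ell_2$ playing the role of $x_3$, $\varphi^l$, $\ell_1$). Substituting $\hat x_{2,n}$ into the explicit formula~(\ref{eqfun}) for $f|_{[0,x_3]}$ and solving gives
\[
\bigl|[x_3,f^{-1}(\hat x_{2,n})]\bigr| \;=\; x_3\,(\varphi^l)^{-1}\!\Bigl((|\hat x_{2,n}|/|x_1|)^{1/\ell_1}\Bigr).
\]

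Because $f\in\W_\delta$, the diffeomorphism $\varphi^l$ has distortion controlled uniformly in terms of $\delta$, which yields a bound of the form $(\varphi^l)^{-1}(z)\leq K(\delta)\,z$ on $[0,1]$. Dividing through by $|U|=x_4-x_3$, the statement of the corollary reduces to establishing
\[
\frac{x_3}{|U|}\Bigl(\frac{|\hat x_{2,n}|}{|x_1|}\Bigr)^{1/\ell_1} \;\lesssim\; 1,
\]
uniformly in $n\in\N$ and $f\in\W_\delta$. Equivalently, the orbit point $\hat x_{2,n}$ must lie inside (up to bounded distortion) the $f$-image of the $|U|$-neighborhood $[x_3-|U|,x_3]$, which has length of order $|x_1|(|U|/x_3)^{\ell_1}$.

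This last uniform inequality is the main obstacle. My plan is to split according to $n$. For $n$ sufficiently large, Proposition~\ref{Prop:gapsgotozero} gives the exponential decay of $|\hat x_{2,n}|$, which (with a rate uniform on $\W_\delta$, coming from the $\Cd$ bound $\delta$) makes the desired inequality immediate. For the finitely many small values of $n$, one uses the bounded geometry of the initial renormalizations forced by the $\Cd$ bound together with the $\infty$-renormalizability of $f$, to obtain uniform comparability between $x_3$, $|x_1|$, $|U|$, and $|\hat x_{2,n}|$; these finitely many estimates are absorbed into the constant. Combining the two regimes produces a single $C(\delta)$ independent of $n$ and $f\in\W_\delta$, giving the $O(1)$ bound claimed.
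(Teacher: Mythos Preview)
The paper gives no explicit proof of this corollary; it is simply stated between Proposition~\ref{Prop:gapsgotozero} and Proposition~\ref{Prop:preimbiggap}, both of whose proofs are delegated to \cite{5aut}. So there is no detailed argument in the paper to compare against.

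Your reduction of the claim to the inequality
\[
\frac{x_3}{|U|}\Bigl(\frac{|\hat x_{2,n}|}{|x_1|}\Bigr)^{1/\ell_1}\;\lesssim\;1
\]
is correct, but the proposed two--regime argument has a genuine gap concerning uniformity in $f\in\W_\delta$. The class $\W_\delta$ bounds only the diffeomorphic parts $\varphi,\varphi^l,\varphi^r$; it imposes no direct restriction on the simplex coordinates $x_1,\dots,x_4$. Hence the prefactor $x_3/|U|$ (and likewise $|x_1|^{-1}$) is not a~priori controlled by $\delta$. This breaks both regimes: in the ``large $n$'' part, the threshold at which the exponential decay of $|\hat x_{2,n}|$ from Proposition~\ref{Prop:gapsgotozero} dominates the prefactor depends on $f$, not only on $\delta$; in the ``small $n$'' part, the asserted ``uniform comparability between $x_3$, $|x_1|$, $|U|$'' is exactly the non--trivial real a~priori bound that must be \emph{proved}, not assumed. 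That bound is essentially the content of Proposition~\ref{Prop:preimbiggap} (at the lowest admissible level it gives $|[0,x_3]|=O(|U|)$ with constant depending only on $\delta$), and once one has it the corollary follows in one line from $|[x_3,f^{-1}(\hat x_{2,n})]|\le x_3$, with no case splitting and no appeal to Proposition~\ref{Prop:gapsgotozero}. In short, the ingredient actually carrying the uniformity is the a~priori bound you invoke without justification, and Proposition~\ref{Prop:gapsgotozero} alone does not supply it.
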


\begin{prop}\label{Prop:preimbiggap}
Fix $\delta>0$. Then for all $f\in\W_{\delta}$ and $n\in\N$,
$$
\max\left\{\left|\left[0, \hat x_{3,n}\right]\right|,\left|\left[\hat x_{4,n}, \hat x_{3,n-2} \right]\right|\right\}=O\left({\left|\left[\hat x_{3,n},\hat x_{4,n}\right]\right|}\right)
$$
and 
$$
\left|\left[0, \hat x_{2,n}\right]\right|=O\left({\left|\left[\hat x_{3,{n+1}},\hat x_{4,{n+1}}\right]\right|}\right)
$$
where the constant in the orders depend only on $\delta$.
\end{prop}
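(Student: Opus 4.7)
The plan is to establish both displayed inequalities by bounded-distortion arguments: the three adjacent intervals in each inequality will be pulled back to a neighborhood of the flat interval $U = [x_3, x_4]$ by an appropriate iterate of $f$, where Corollary \ref{Cor:flatlonger} provides the key length comparisons on the target side.

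For the first inequality, I would consider the iterate $f^{q_n - 1}$. It sends $[\hat x_{3,n}, \hat x_{4,n}]$ monotonically onto $U$, and by the Fibonacci combinatorics it extends to a diffeomorphism on a monotone branch $T_n$ reaching from $0$ to $\hat x_{3, n-2}$: the intermediate deeper-level preimages $\hat x_{3, n+2k}, \hat x_{4, n+2k}$ lying inside $T_n$ are not critical for $f^{q_n-1}$ (their critical times $q_{n+2k}-1$ exceed $q_n-1$), while $\hat x_{3, n-2}$ is the first shallower-level preimage and naturally terminates the branch. Using the Fibonacci identities $\hat x_{1,k} = \hat x_{2,k-1}$ and $q_n - q_{n-2} = q_{n-1}$ one computes $f^{q_n-1}(0) = f^{-1}(\hat x_{2, n-1})$ and $f^{q_n-1}(\hat x_{3, n-2}) = f^{q_{n-1}-1}(0) = f^{-1}(\hat x_{2, n-2})$; by Corollary \ref{Cor:flatlonger} applied at indices $n-1$ and $n-2$ both of these are within $O(|U|)$ of the set $\{x_3, x_4\}$ (the exact side depends on the parity of $n$, but only the order of magnitude matters). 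Applying the Koebe principle to $f^{q_n - 1}$ on $T_n$, with the $\Cd$-bound $\delta$ controlling the accumulated nonlinearity and a slight Fibonacci extension $\tilde T_n \supset T_n$ supplying the requisite Koebe space, the resulting uniform bounded distortion transfers the three $O(|U|)$-comparable target intervals back to $[0, \hat x_{3,n}]$, $[\hat x_{3,n}, \hat x_{4,n}]$, $[\hat x_{4,n}, \hat x_{3,n-2}]$, giving the first inequality.

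The second inequality follows by applying the same strategy one level deeper. One option is to rerun the argument with $f^{q_{n+1}-1}$ on a branch straddling $0$: the first-return structure of $R^n f$ on the left half of its domain makes $f^{q_{n+1}-1}$ factor across $[\hat x_{2,n},0]$ through the left-branch diffeomorphism, and Corollary \ref{Cor:flatlonger} at index $n$ again pins down the image of the endpoint. Alternatively, one rewrites the desired ratio in $R^n f$-coordinates as $|x_{1,n}|/(x_{4,n}-x_{3,n})$ and invokes bounded distortion of the $q_s \circ \varphi$ factor on the left branch of $R^n f$, combining it with the first inequality applied at level $n+1$ (which gives $x_{3,n}/(x_{4,n}-x_{3,n}) = O(1)$) to control the left-branch preimage of the flat interval.

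The main obstacle is the Koebe step of the first inequality: securing uniform bounded distortion of $f^{q_n-1}$ on $T_n$ with constants depending only on $\delta$ and the critical exponents. This requires a careful combinatorial identification of the maximal monotone branch through the Fibonacci structure, together with a distortion estimate that survives the possible proximity to the critical endpoint $\hat x_{3,n-2}$ and the $q_n - 1$-fold accumulation of $C^2$-nonlinearity; once this uniform distortion is in hand, the remaining Fibonacci bookkeeping turning the combinatorial identities into precise images at the endpoints is essentially routine.
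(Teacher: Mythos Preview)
The paper does not give its own proof of this proposition; it simply remarks that the argument for Proposition~2 in \cite{5aut} (equal-exponent case) is purely combinatorial, does not involve the value of the critical exponent, and therefore transfers verbatim. That argument proceeds via direct cross-ratio inequalities for long iterates, not via a Koebe distortion step of the kind you outline.

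Your plan is close in spirit but has a directional gap worth naming. The Koebe principle as stated here (Proposition~\ref{Koebe}) takes image-side space $|f^{q_n-1}(M)|/|f^{q_n-1}(S)|,\ |f^{q_n-1}(M)|/|f^{q_n-1}(D)|<\alpha$ and returns bounded distortion of $f^{q_n-1}$ on the \emph{middle} interval $M$ only. What you actually need is to transfer the image-side comparisons $|f^{q_n-1}(S)|,|f^{q_n-1}(D)|=O(|f^{q_n-1}(M)|)$ (coming from Corollary~\ref{Cor:flatlonger}) back to $|S|,|D|=O(|M|)$ on the domain side; that requires bounded distortion on all of $T=S\cup M\cup D$, hence Koebe space \emph{outside} $T$. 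But the maximal monotone branch of $f^{q_n-1}$ through $[\hat x_{3,n},\hat x_{4,n}]$ terminates exactly at earlier-level critical preimages such as $\hat x_{3,n-2}$, and its image is again $U$ together with $O(|U|)$-extensions (by Corollary~\ref{Cor:flatlonger} once more). There is no further definite space available, so the ``slight Fibonacci extension $\tilde T_n\supset T_n$'' you invoke does not obviously deliver what is needed; the obstacle you flag is real and is not resolved by the sketch.

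The cross-ratio route of \cite{5aut} bypasses this difficulty: expansion of the cross-ratio under long iterates (a Schwarzian/power-law fact independent of $\ell_1,\ell_2$) compares $\frac{|S|\,|D|}{|M|\,|T|}$ on domain and image directly, without any pre-existing outer space, and combined with the combinatorics of closest returns yields the desired a~priori bound. Replacing your Koebe step by that cross-ratio inequality is the missing ingredient; once you do so, the endpoint identifications you wrote down ($f^{q_n-1}(0)=f^{-1}(\hat x_{2,n-1})$, $f^{q_n-1}(\hat x_{3,n-2})=f^{-1}(\hat x_{2,n-2})$) and Corollary~\ref{Cor:flatlonger} finish both displayed estimates.
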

The proofs of Proposition \ref{Prop:gapsgotozero} and Proposition \ref{Prop:preimbiggap} are the same as the ones of Proposition $1$ and Proposition $2$ in \cite{5aut} where the authors prove the same statements for Cherry maps with the same critical exponent at the boundary of the flat interval, namely $\ell_1=\ell_2$. The proofs of Proposition $1$ and Proposition $2$ in \cite{5aut} rely only on combinatorial arguments and they do not involve the critical exponent of the map. For this reason they can be repeated in our more general case to prove Proposition \ref{Prop:gapsgotozero} and Proposition \ref{Prop:preimbiggap}.

\section{The asymptotics of renormalization}\label{asymrenormalization}
This section explores the asymptotical behavior of the renormalization operator.  
A crucial role will be played by the following sequence whose asymptotics describe the "geometry" of the system near the boundary points of the flat piece. The sequence $\alpha_n$ is defined as
\begin{equation}\label{eq:alphan}
\alpha_n:=\frac{\left|\left[0, x_3\right]\right|}{\left|\left[0, x_4\right]\right|}=\frac{x_{3,n} }{x_{4,n}}=\frac{S_{2,n}S_{3,n}}{1-S_{2,n}+\left(1-S_{1,n}\right)S_{2,n}S_{3,n}}.
\end{equation}
The above formulation of the sequence $\alpha_n$ in the $S_{i,n}$ coordinates can be deduced from Lemma \ref{xtos}.

\subsection{Asymptotics of the distortions}
In this section we show that the diffeomorphic parts of the renormalizations have bounded distortion. We start by giving the definition of distortion.
\begin{defin}
Let $\varphi: N\to N$ be a $\Cuno$ map where $N$ is an interval. If $T\subset N$ is an interval such that $D\varphi(x)\neq 0$ for every $x\in T$, we define the \emph{distortion} of $\varphi$ in $T$ as:
$$\text{dist}(\varphi,T)=\sup_{x,y\in T}\log\frac{|D\varphi(x)|}{|D\varphi(y)|}.$$

\end{defin}
\begin{prop}\label{affdiffeo}
Fix $\delta>0$. Then, for all $f\in\W_{\delta}$ and $n\in\N$,
$$\begin{matrix}
   \text{dist}(\varphi_{n})=O\left(\alpha_{n-2}^{\frac{1}{\ell}}\right) & \text{dist}(\varphi_{n}^{l})=O\left(\alpha_{n-1}^{\frac{1}{\ell}}\right)& 
   \text{dist}(\varphi_{n}^{r})=O\left(\alpha_{n}^{\frac{1}{\ell}}\right)
  \end{matrix}
$$
where $\ell=\max\left\{\ell_1,\ell_2\right\}$ and the constants in the orders depend only on $\delta$.
\end{prop}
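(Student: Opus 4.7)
The plan is to prove the three distortion bounds simultaneously by strong induction on $n$, carrying along a uniform $C^2$-norm bound on the diffeomorphic factors $\varphi_n$, $\varphi_n^l$, $\varphi_n^r$ so that distortion estimates behave well along the renormalization sequence. The argument rests on three elementary facts: the zoom operator preserves distortion, $\mathrm{dist}(Z_I\psi,[0,1])=\mathrm{dist}(\psi,I)$, by the chain rule applied to the affine rescalings defining $Z_I$; composition is subadditive on distortions, $\mathrm{dist}(g\circ h,T)\le\mathrm{dist}(g,h(T))+\mathrm{dist}(h,T)$; and for a $C^2$ diffeomorphism $\psi$, the mean-value bound $\mathrm{dist}(\psi,[a,b])\le(b-a)\,\|D^2\psi/D\psi\|_{\infty}$ holds. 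The base case is immediate from the hypothesis $f\in\W_{\delta}$, which uniformly controls the initial $C^2$ norms; interpreting $\alpha_k=O(1)$ for small or negative $k$, the estimates are vacuous for small $n$.

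For the inductive step, I apply Lemma~\ref{ss}. The identity $\varphi_{n+1}=Z_{[S_{1,n},1]}\varphi_n^l$ immediately yields $\mathrm{dist}(\varphi_{n+1})\le\mathrm{dist}(\varphi_n^l)=O(\alpha_{n-1}^{1/\ell})$, matching the claim $O(\alpha_{(n+1)-2}^{1/\ell})$. Decomposing $\varphi_{n+1}^l$ via Lemma~\ref{ss} and iterating the subadditivity gives
\[
\mathrm{dist}(\varphi_{n+1}^l)\le \mathrm{dist}(\varphi_n^r)+\mathrm{dist}\bigl(q_{s_n},[q_{s_n}^{-1}(1-S_{2,n}),1]\bigr)+\mathrm{dist}\bigl(\varphi_n,[\varphi_n^{-1}q_{s_n}^{-1}(1-S_{2,n}),1]\bigr),
\]
and an analogous expression for $\mathrm{dist}(\varphi_{n+1}^r)$ with intervals based at $0$. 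The terms of the form $\mathrm{dist}(\varphi_n^{\star})$ are bounded by the induction hypothesis; the remaining terms are controlled by the length of the corresponding interval through the mean-value bound, using the inductively maintained $C^2$ control.

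The decisive input is that the interval lengths appearing in these expressions are $O(\alpha^{1/\ell})$ for the appropriate index, and this is where the critical exponent enters the estimate. Because $q_{s_n}$ is a diffeomorphic piece of $y^{\ell_2}$, its inverse satisfies $q_{s_n}^{-1}(z)=O(z^{1/\ell_2})$ and $1-q_{s_n}^{-1}(1-z)=O(z^{1/\ell_2})$ for small $z$. Using the formula \eqref{eq:alphan} for $\alpha_n$ and the recursion of Lemma~\ref{ss}, one checks that the quantities $1-S_{2,n}$, $S_{1,n}S_{2,n}S_{3,n}$, etc.\ are each of order $\alpha$ (or a bounded multiple thereof) at the corresponding generation, so that applying $q_{s_n}^{-1}$ produces length bounds of order $\alpha^{1/\ell_2}$. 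Since $\ell_2\le\ell$ and $\alpha\in(0,1]$, this yields $\alpha^{1/\ell_2}\le\alpha^{1/\ell}$, giving the required bound; a symmetric computation for the left side uses the $\ell_1$-power in the definition of $x_2$ in \eqref{eqfun}.

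The main obstacle is the uniform $C^2$ control of $\varphi_n$, $\varphi_n^l$, $\varphi_n^r$ along the renormalization sequence. The zoom operator preserves such bounds only when the zoom interval is not too degenerate, and $q_{s_n}$ itself becomes $C^2$-degenerate near $0$ when $s_n\to 0$, since the limit $q_0(x)=x^{\ell_2}$ has vanishing derivative at $0$ whenever $\ell_2>1$. Addressing this requires showing that the zoom intervals appearing in Lemma~\ref{ss} stay in a non-degenerate region, using the real bounds supplied by Proposition~\ref{Prop:preimbiggap} and Corollary~\ref{Cor:flatlonger} to prevent the relevant pullbacks from collapsing onto the critical region; this is the principal technical hurdle of the proof.
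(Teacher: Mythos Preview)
Your approach is genuinely different from the paper's, and it has a real gap precisely where you flag the ``principal technical hurdle''. The paper does \emph{not} argue by induction on the renormalization recursion of Lemma~\ref{ss}. Instead it observes that each diffeomorphic factor is a rescaled iterate of the \emph{original} map: $\varphi_n^l=Z_M f^{q_n-1}$, $\varphi_n=Z_M f^{q_{n-1}-1}$, $\varphi_n^r=Z_M f^{q_n-1}$, for suitable intervals $M$. It then applies the Koebe distortion principle (Proposition~\ref{Koebe}) directly to these long compositions, producing the bound from a single cross-ratio estimate supplied by Proposition~\ref{Prop:preimbiggap} and Corollary~\ref{Cor:flatlonger}. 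No inductive $C^2$ control on $\varphi_n,\varphi_n^l,\varphi_n^r$ is ever needed; Koebe eats all the compositions at once.

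Your inductive scheme, by contrast, needs exactly this control, and it is not available. Two concrete failures: first, the nonlinearity of $q_{s_n}$ satisfies $D^2q_{s_n}/Dq_{s_n}=(\ell_2-1)(1-s_n)/((1-s_n)x+s_n)$, which blows up like $1/s_n$ near $x=0$; since $\varphi_{n+1}^r$ involves $Z_{[0,\,\cdot\,]}q_{s_n}$, the mean-value bound $\mathrm{dist}\le |I|\cdot\|D^2\psi/D\psi\|_\infty$ is useless there in the degenerate regime $s_n\to 0$. Second, your assertion that $q_{s_n}^{-1}(z)=O(z^{1/\ell_2})$ is only correct for $s_n=0$; for $s_n>0$ the inverse is linear near $0$ with slope $\asymp s_n^{1-\ell_2}$, so the interval-length bookkeeping you sketch does not give the stated $\alpha$-indices. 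Establishing the required a~priori bounds on the zoom intervals and on the nonlinearities would, in effect, force you back to the real bounds of Proposition~\ref{Prop:preimbiggap} applied to iterates of $f$ --- i.e.\ back to the paper's Koebe argument. In short, the Koebe route is not just more elegant here; it is what makes the proof close.
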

The following Proposition is a preparation for proving Proposition \ref{affdiffeo}.
\begin{prop}\label{Koebe}
Fix $\delta>0$. Then for all $f\in\W_{\delta}$ and $\alpha>0$ the following holds. Let $T$ and $M\subset T$ be two intervals and let $S, D$ be the left and the right component of $T\setminus M$ and $n\in\N$. Suppose that
\begin{enumerate}
\item for every $0\leq i\leq n-1$ the intervals $f^i(T)$ are pairwise disjoint,
\item $f^n:T\to f^n(T)$ is a diffeomorphism,
\item $\frac{\left|f^n(M)\right|}{\left|f^n(S)\right|},\frac{\left|f^n(M)\right|}{\left|f^n(D)\right|}<\alpha$.
\end{enumerate}
Then 
$$
\text{dist}\left(f^n(M)\right)=O(\alpha)
$$
where the constant in the order depends only on $\delta$.
\end{prop}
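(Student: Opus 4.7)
The plan is to establish this as a Koebe-type principle for the iterate $f^n$ restricted to $M$, using the classical cross-ratio (Poincar\'e distortion) technique adapted to the class $\W_\delta$. Under condition 2 the restriction $f^n|_T$ is a composition, over $0\le i<n$, of the elementary factors coming from $f$ on $f^i(T)$: affine rescalings, one of the $\Cd$-bounded diffeomorphisms $\varphi,\varphi^l,\varphi^r$ (with $\Cd$-norm $\le\delta$), and a critical power factor of the form $x^{\ell_1}$, $x^{\ell_2}$, or the partial power $q_s$. I would work with the Poincar\'e-type cross-ratio
$$
B(T,M)=\frac{|L|\,|R|}{|T|\,|M|},
$$
where $L$ and $R$ are the components of $T\setminus M$ (i.e.\ $L=S$, $R=D$), with the aim of comparing $B(T,M)$ and $B(f^n(T),f^n(M))$.

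The per-step estimates are standard. Affine maps preserve $B$. The critical factors $x^{\ell}$ with $\ell\ge 1$, and consequently their affine restrictions such as $q_s$, have non-positive Schwarzian derivative, hence expand $B$: one has $B(g(T),g(M))\ge B(T,M)$. The $\Cd$-bounded diffeomorphic factors satisfy a cross-ratio inequality of the form $B(g(T),g(M))\ge (1-C_\delta |g(T)|)B(T,M)$, with $C_\delta$ depending only on $\delta$. Composing across the $n$ iterates, the critical factors only help, while the diffeomorphic errors accumulate multiplicatively. Using the disjointness assumption 1 one has $\sum_{i=0}^{n-1}|f^i(T)|=O(1)$, so the product telescopes to a uniform constant and yields
$$
B(T,M)\le K(\delta)\,B(f^n(T),f^n(M)).
$$
Condition 3 forces $B(f^n(T),f^n(M))=O(\alpha)$, hence $B(T,M)=O(\alpha)$, and the standard conversion from cross-ratio control to distortion (a mean value argument combined with the Koebe space on both sides) produces the desired bound $\dist(f^n,M)=O(\alpha)$.

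The main obstacle is checking that the per-step cross-ratio inequalities hold uniformly over all of $\W_\delta$ and for every branch met along the orbit. The partial power $q_s$ is the most delicate point: its parameter $s\in(0,1)$ may degenerate during renormalization, and one must verify that its Schwarzian stays non-positive uniformly in $s$. This is indeed the case, since $q_s$ is an affine reparametrization of a piece of $x^{\ell_2}$ and affine conjugacy preserves the sign of the Schwarzian. The constant $C_\delta$ in the diffeomorphic step in turn amounts to a uniform bound on the nonlinearity $D^2\varphi/D\varphi$, which follows from $|\varphi|_{\Cd}\le\delta$ together with the diffeomorphism property of $\varphi\in\text{Diff }^2([0,1])$; this is precisely the ingredient that appears in the treatment of the equal-exponent case in~\cite{5aut}, and I expect no essential modification to be needed here.
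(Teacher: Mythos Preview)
The paper does not prove this proposition; it simply refers to the Koebe principle in \cite{deMelovanStrien}. Your outline is aiming at exactly that argument, and the structural ingredients you isolate---decomposing $f^n|_T$ into affine pieces, power-law factors $x^{\ell_i}$ and $q_s$ with non-positive Schwarzian (your remark that $q_s$ is an affine reparametrization of $x^{\ell_2}$ is exactly the point), and $\Cd$-bounded diffeomorphisms whose cross-ratio distortion on $f^i(T)$ is $1+O_\delta(|f^i(T)|)$, then telescoping via the disjointness hypothesis---are the right ones and match the cited reference.

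The execution, however, has the cross-ratio running in the wrong direction, and the slip is not self-correcting. With your normalization $B(T,M)=|L|\,|R|/(|T|\,|M|)$, maps with $Sg\le 0$ \emph{contract} $B$ (equivalently, they expand $1/B$; check $g(x)=x^2$); and condition~3, which says $|f^n(S)|,|f^n(D)|>\alpha^{-1}|f^n(M)|$, gives $B(f^n(T),f^n(M))\gtrsim 1/\alpha$, not $O(\alpha)$. Fixing both signs you only obtain Koebe space $\gtrsim 1/\alpha$ on the \emph{domain} side, and Koebe space on both sides of an arbitrary diffeomorphism says nothing about its distortion. The actual Koebe step does not pass through a bound on $B(T,M)$ at all: one applies the cross-ratio inequality to the degenerate configuration $(T,J_\varepsilon)$ with $J_\varepsilon\downarrow\{z\}$ for $z\in M$, which compares $(f^n)'(z)$ with the macroscopic slopes $|f^n(L_z)|/|L_z|$ and $|f^n(R_z)|/|R_z|$; it is the multiplicative cross-ratio bound together with the \emph{image}-side space $1/\alpha$ that then forces $(f^n)'(x)/(f^n)'(y)=1+O(\alpha)$ for $x,y\in M$. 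Once you replace your intermediate chain ``$B(f^n(T),f^n(M))=O(\alpha)\Rightarrow B(T,M)=O(\alpha)$'' by this pointwise argument (or simply invoke \cite{deMelovanStrien}), the rest of your outline goes through.
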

The proof of the previous proposition can be found in \cite{deMelovanStrien}.
We are now ready to prove Proposition \ref{affdiffeo}.
\begin{proof} In this proof we use the notation introduced in Subsection \ref{Fib}.
We start by proving the statement for $\varphi_n^l$. Define 
\begin{itemize}
\item[-] $T=\left[\hat x_{4,n+1},\hat x_{4,n}\right]$,
\item[-] $M=\left[f(x_{3}),\hat x_{3,n}\right]=\left[0,\hat x_{3,n}\right]$,
\item[-] $S=\left[\hat x_{4,n+1}, f(x_{3})\right]=\left[\hat x_{4,n+1}, 0\right]$,
\item[-] $D=\left[\hat x_{3,n},\hat x_{4,n}\right]$.
\end{itemize}
Observe that $$\varphi_{n}^l=Z_{M}f^{q_n-1}.$$ We claim that:
\begin{enumerate}
\item for every $0\leq i\leq q_n-2$ the intervals $f^i(T)$ are pairwise disjoint,
\item $f^{q_n-1}:T\to f^{q_n-1}(T)$ is a diffeomorphism,
\item ${\left|f^{q_n-1}(M)\right|}/{\left|f^{q_n-1}(S)\right|},{\left|f^{q_n-1}(M)\right|}/{\left|f^{q_n-1}(D)\right|}=O\left(\alpha_{n-1}^{\frac{1}{\ell}}\right)$.
\end{enumerate}
Points $1$ and $2$ comes from general properties of circle maps. For point $3$ observe that 
\begin{eqnarray*}
\frac{\left|f^{q_n-1}(M)\right|}{\left|f^{q_n-1}(S)\right|}=\frac{\left|\left[f^{q_n}(x_3),x_3\right]\right|}{\left|\left[f^{-q_{n-1}}(x_4),f^{q_n}(x_3)\right]\right|}.
\end{eqnarray*}
As consequence, under the image of $f$,
\begin{eqnarray*}
\frac{\left|f^{q_n}(M)\right|}{\left|f^{q_n}(S)\right|}=\frac{\left|\left[\hat x_{2,n-1}, 0\right]\right|}{\left|\left[\hat x_{4,n-1},\hat x_{2,n-1}\right]\right|}=O\left(\alpha_{n-1}\right)
\end{eqnarray*}
where we used Proposition \ref{Prop:preimbiggap}.
Hence 
\begin{eqnarray}\label{eq:movers}
\frac{\left|f^{q_n-1}(M)\right|}{\left|f^{q_n-1}(S)\right|}=O\left(\alpha_{n-1}^{\frac{1}{\ell}}\right).
\end{eqnarray}
Observe that, because $U_f= f^{q_n-1}(D)$, by Corollary \ref{Cor:flatlonger} and (\ref{eq:movers}), 

\begin{eqnarray*}
\frac{\left|f^{q_n-1}(M)\right|}{\left|f^{q_n-1}(D)\right|}=O\left( \frac{\left|f^{q_n-1}(M)\right|}{\left|f^{q_n-1}(S)\right|}\right)=O\left(\alpha_{n-1}^{\frac{1}{\ell}}\right).
\end{eqnarray*}
By Proposition \ref{Koebe} we get the desired distortion estimate for $\varphi_n^l$.

To prove the distortion estimate for $\varphi_n$ notice that $\varphi_n=Z_{M}f^{q_{n-1}-1}$ and repeat the previous argument with
\begin{itemize}
\item[-] $T=\left[\hat x_{4,n-1},\hat x_{4,n}\right]$,
\item[-] $M=\left[\hat x_{1,n}, f(x_{3})\right]=\left[\hat x_{1,n}, 0\right]$,
\item[-] $S=\left[\hat x_{4,n-1}, \hat x_{1,n}\right]$,
\item[-] $D=\left[f(x_{3}),\hat x_{4,n}\right]=\left[0, \hat x_{4,n}\right]$.
\end{itemize}

For the distortion estimate of $\varphi_n^r$, take
\begin{itemize}
\item[-] $T=\left[\hat x_{3,n},\hat x_{3,n-2}\right]$,
\item[-] $M=\left[\hat x_{4,n}, \hat x_{2,n-2}\right]$,
\item[-] $S=\left[\hat x_{3,n}, \hat x_{4,n}\right]$,
\item[-] $D=\left[\hat x_{2,n-2},\hat x_{3,n-2}\right]$,
\end{itemize}
and notice that $\varphi_n^r=Z_{M}f^{q_{n}-1}$. We claim that
\begin{enumerate}
\item for every $0\leq i\leq q_n-2$ the intervals $f^i(T)$ are pairwise disjoint,
\item $f^{q_n-1}:T\to f^{q_n-1}(T)$ is a diffeomorphism,
\item ${\left|f^{q_n-1}(M)\right|}/{\left|f^{q_n-1}(S)\right|},{\left|f^{q_n-1}(M)\right|}/{\left|f^{q_n-1}(D)\right|}=O\left(\alpha_{n}^{\frac{1}{\ell}}\right)$.
\end{enumerate}
Points $1$ and $2$ comes from general properties of circle maps. For point $3$ observe that 
\begin{eqnarray*}
\frac{\left| f^{q_n-1}(M)\right|}{\left| f^{q_n-1}(D)\right|}=\frac{\left|\left[x_4, f^{q_{n+1}}(x_3)\right]\right|}{\left|\left[f^{q_{n+1}}(x_3), f^{q_{n-1}}(x_3)\right]\right|}\leq \frac{\left|\left[x_4, f^{q_{n+1}}(x_3)\right]\right|}{\left|\left[f^{-q_{n}}(x_3), f^{-q_{n}}(x_4))\right]\right|}.
\end{eqnarray*}
As consequence, under the image of $f$,
\begin{eqnarray*}
\frac{\left|f^{q_n}(M)\right|}{\left|f^{q_n}(D)\right|}\leq\frac{\left|\left[0,\hat x_{2,n}\right]\right|}{\left|\left[\hat x_{3,n},\hat x_{4,n}\right]\right|}=O\left(\alpha_{n}\right)
\end{eqnarray*}
where we used Proposition \ref{Prop:preimbiggap}.
Hence 
\begin{eqnarray}\label{eq:moverd1}
\frac{\left|f^{q_n-1}(M)\right|}{\left|f^{q_n-1}(D)\right|}=O\left(\alpha_{n}^{\frac{1}{\ell}}\right).
\end{eqnarray}
Observe that, by Corollary \ref{Cor:flatlonger} and (\ref{eq:moverd1}),
\begin{eqnarray*}
\frac{\left|f^{q_n-1}(M)\right|}{\left|f^{q_n-1}(S)\right|}=\frac{\left|\left[x_4, f^{q_{n+1}}(x_3)\right]\right|}{\left|\left[x_3, x_4\right]\right|}\leq \frac{\left|f^{q_n-1}(M)\right|}{\left|f^{q_n-1}(D)\right|}=O\left(\alpha_{n}^{\frac{1}{\ell}}\right).
\end{eqnarray*}
By Proposition \ref{Koebe} we get the desired distortion estimate for $\varphi_n^r$.
\end{proof}

%
%

\subsection{Asymptotics of renormalization}
\begin{lem}\label{prev}
Fix $\delta>0$. Then for all $f\in\W_{\delta}$ and $n\in\N$,
\begin{eqnarray*}
1.&S_{1,n}&=O\left(\alpha_{n+1}^{\frac{1}{\ell}}\right)\\
2.&  S_{2,n}&=O\left(\alpha_{n-1}^{\frac{1}{\ell}}\right)\\
3.&  s_{n}&=O\left(\alpha_{n}^{\frac{\ell_{n+1}-1}{\ell}}\right)\\
4.&S_{1,n}S_{2,n}S_{3,n}&=O\left(\alpha_{n}\right)\\
5.&S_{2,n}S_{3,n}&=\frac{S_{2,n-1}}{\ell_{n}}O\left(1\right)
\end{eqnarray*}
with $\ell=\max\left\{\ell_1,\ell_2\right\}$, $\ell_n=\ell_1$ for $n$ even and $\ell_n=\ell_2$ for $n$ odd. Moreover the constants in the orders depend only on $\delta$.
\end{lem}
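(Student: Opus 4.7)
My plan is to prove the five estimates in the order $4\to 1\to 3\to 5\to 2$, combining the explicit renormalization formulas of Lemma \ref{changexs} (and Lemma \ref{ss}), the distortion bounds of Proposition \ref{affdiffeo}, and the preimage-length estimates of Proposition \ref{Prop:preimbiggap} together with Corollary \ref{Cor:flatlonger}. Item 4 is elementary: rewriting in $X$-coordinates,
\[
S_{1,n}S_{2,n}S_{3,n}\;=\;\frac{x_{3,n}-x_{2,n}}{1-x_{2,n}}\;\le\;x_{3,n}\;\le\;\frac{x_{3,n}}{x_{4,n}}\;=\;\alpha_{n},
\]
where the first inequality uses $x_{2,n}\le x_{3,n}$ (by renormalizability) and the third uses $x_{4,n}\le 1$.

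For items 1 and 3, the key inputs are the identities $x_{2,n+1}=(\varphi^l_n(S_{1,n}))^{\ell_n}$ and $s_{n+1}=\varphi^l_n(S_{1,n})$ of Lemma \ref{changexs} (items 2 and 5), where $\ell_n$ is the left critical exponent of $R^nf$. Proposition \ref{affdiffeo} forces the distortion of $\varphi^l_n$ to be $o(1)$, so the normalizations $\varphi^l_n(0)=0$, $\varphi^l_n(1)=1$ yield $\varphi^l_n(y)\asymp y$ uniformly in $n$. Consequently $S_{1,n}^{\ell_n}\asymp x_{2,n+1}$ and $s_{n+1}\asymp S_{1,n}$. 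Applying the same inequalities used in item 4 at step $n+1$ gives $x_{2,n+1}\le x_{3,n+1}\le\alpha_{n+1}$, and together with $\ell_n\le\ell$, $\alpha_{n+1}\le 1$ this delivers item 1. Item 3 follows in a parallel way from $s_n\asymp S_{1,n-1}$; the precise exponent $(\ell_{n+1}-1)/\ell$ is then obtained by rewriting $s_n^{\ell_{n+1}-1}=S_{5,n}$ and combining the recursion $S_{5,n+1}\asymp S_{1,n}^{\ell_n-1}$ of Lemma \ref{ss} item 5 with the bound on $S_{1,n-1}$ and the $2$-periodicity $\ell_{n-1}=\ell_{n+1}$.

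Item 5 is obtained by Taylor-expanding $x_{3,n}=1-(\varphi^{-1}_{n-1}\circ q^{-1}_{s_{n-1}})(1-S_{2,n-1})$ from Lemma \ref{changexs} item 3 around the fixed point $1$: the derivative at $1$ equals $1/[q'_{s_{n-1}}(1)\,\varphi'_{n-1}(1)]$, and a direct calculation gives $q'_s(1)=(1-s)\ell_2/(1-s^{\ell_2})\asymp\ell_n$, while $\varphi'_{n-1}(1)=O(1)$ by the distortion bound. Hence $x_{3,n}\asymp S_{2,n-1}/\ell_n$. Combined with $1-x_{2,n}=1-(\varphi^l_{n-1}(S_{1,n-1}))^{\ell_{n-1}}\asymp 1$ (since item 1 forces $S_{1,n-1}\to 0$), this yields $S_{2,n}S_{3,n}=x_{3,n}/(1-x_{2,n})\asymp S_{2,n-1}/\ell_n$, which is item 5.

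Item 2 is the most delicate step, and I expect it to be the main obstacle. Lemma \ref{ss} item 2 gives $S_{2,n}\asymp S_{1,n-1}S_{2,n-1}S_{3,n-1}/(\ell_nS_{5,n-1})$; item 4 bounds the numerator by $\alpha_{n-1}$, but extracting the stated exponent $1/\ell$ requires a matching lower bound of the form $S_{5,n-1}\ge\Omega(\alpha_{n-1}^{(\ell-1)/\ell})$. I plan to produce such a lower bound by a direct dynamical argument: the critical right branch of $R^{n-2}f$ near $x_{4,n-2}$ carries exponent $\ell_{n-1}$, and combining its inverse with the preimage estimate of Proposition \ref{Prop:preimbiggap} and the flat-spot comparison of Corollary \ref{Cor:flatlonger} should pin $s_{n-1}$ down from below. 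Item 2 then follows from the exact balance between the numerator estimate from item 4 and this lower bound on $S_{5,n-1}=s_{n-1}^{\ell_n-1}$.
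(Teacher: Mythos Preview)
Your treatment of items 1, 3, 4, and 5 matches the paper's proof essentially line for line: item 4 is the elementary identity you wrote, item 1 uses Lemma~\ref{changexs}(2) together with the distortion control on $\varphi^l_n$, item 3 comes from $s_n\asymp S_{1,n-1}$ via Lemma~\ref{changexs}(5), and item 5 is exactly the mean-value/Taylor estimate of $1-(\varphi^{-1}_{n-1}\circ q^{-1}_{s_{n-1}})(1-S_{2,n-1})$ combined with $Dq_{s_{n-1}}(\theta)\asymp\ell_n$ for $\theta$ near $1$.

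For item 2, however, your route diverges from the paper's. You propose to feed the recursion of Lemma~\ref{ss}(2) with the numerator bound $S_{1,n-1}S_{2,n-1}S_{3,n-1}=O(\alpha_{n-1})$ and a \emph{lower} bound $S_{5,n-1}\ge\Omega(\alpha_{n-1}^{(\ell-1)/\ell})$. This can indeed be made to work --- one has $s_{n-1}\asymp S_{1,n-2}$ and then $S_{1,n-2}^{\ell_{n-2}}\asymp x_{2,n-1}\ge\Omega(\alpha_{n-1})$ once $S_{1,n-1}$ and $S_{2,n-1}$ are known to stay bounded away from $1$ (which the paper extracts from Proposition~\ref{Prop:preimbiggap}) --- but your sketch is vague, and your reference to ``the critical right branch of $R^{n-2}f$ near $x_{4,n-2}$'' points to the wrong place: $s_{n-1}=\varphi^l_{n-2}(S_{1,n-2})$ is governed by the \emph{left} branch of $R^{n-2}f$, not the right one. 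The paper avoids this detour entirely. It reads $S_{2,n+2}$ directly as a ratio of dynamical intervals of the original map $f$, transports that ratio via the bounded-distortion diffeomorphism $f^{q_{n+1}-1}$ to a ratio of preimages $f^{-q_{n+1}}(x_3)$, $f^{-q_{n+1}}(x_4)$ sitting next to $x_3$, and then applies $f$ once more: the power law $|x-x_3|^{\ell_1}$ at the boundary yields $S_{2,n+2}^{\ell_1}=O(\alpha_{n+1})$ in a single stroke, with no lower bound on $S_5$ required. Your approach trades this geometric picture for purely algebraic bookkeeping, at the cost of an extra a~priori input that you have not yet nailed down; the paper's approach buys directness at the cost of tracking the dynamical intervals $\hat x_{i,n}$.
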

\begin{proof}
We prove the lemma assuming that $n$ is even. As a consequence, in a left-sided neighborhood of $[x_{3,n},x_{4,n}]$, the critical exponent is ${\ell_1}$ and in a right-sided neighborhood the critical exponent is ${\ell_2}$. The proof in the case of $n$ odd is exactly the same, one just has to flip the exponents. Observe that, because $\varphi^l_n$ has bounded distortion (see Proposition \ref{affdiffeo}), we have that $S_{1,n}=O\left(\varphi^l_n\left(S_{1,n}\right)\right)$ and by point $2$ of Lemma \ref{changexs},
$$
\left(\varphi^l_n\left(S_{1,n}\right)\right)^{\ell_1}=x_{2,n+1}.
$$ 
Hence
\begin{eqnarray}\label{eq:s1asalpha}
S_{1,n}=O\left(x_{2,n+1}^{\frac{1}{\ell_1}}\right)=O\left(x_{3,n+1}^{\frac{1}{\ell_1}}\right)=O\left(\alpha_{n+1}^{\frac{1}{\ell_1}}\right)=O\left(\alpha_{n+1}^{\frac{1}{\ell}}\right).
\end{eqnarray}
Point $1$ is proved. In order to show point $2$ observe that, by Proposition \ref{Prop:preimbiggap} and Proposition \ref{affdiffeo}, there exist two constants $K_1$ and $K_2$, depending only on $\delta$, such that 
$$
S_{2,n+2}\leq K\frac{\left|\left[x_{4,n+2},x_{3,n}\right]\right|}{\left|\left[x_{3,n+2},x_{3,n}\right]\right|}\leq K_1K_2\frac{\left|\left[f^{-q_{n+1}}(x_4),x_{3}\right]\right|}{\left|\left[f^{-q_{n+1}}(x_3),x_{3}\right]\right|}.
$$
Moreover,
$$
\left(\frac{\left|\left[f^{-q_{n+1}}(x_4),x_{3}\right]\right|}{\left|\left[f^{-q_{n+1}}(x_3),x_{3}\right]\right|}\right)^{\ell_1}=O\left(\frac{x_{3,n+1}}{x_{4,n+1}}\right).
$$
Combining the two previous inequalities, we find

$$
S_{2,n+2}^{\ell_1}=O\left(\frac{x_{3,n+1}}{x_{4,n+1}}\right)=O\left(\alpha_{n+1}\right).
$$
Point $2$ follows. 
By point $5$ of Lemma \ref{ss}, Proposition \ref{affdiffeo} and (\ref{eq:s1asalpha}) we have
$$
s_{n}^{\ell_2-1}=S_{5,n}=S_{1,n-1}^{\ell_2-1}\cdot\left(\frac{\varphi_n^l\left(S_{1,n-1}\right)}{S_{1,n-1}}\right)^{\ell_2-1}=O\left(\alpha_{n}^{\frac{\ell_2-1}{\ell}}\right).
$$
It follows point $3$. 
The proof of point $4$ is a consequence of Proposition \ref{Prop:preimbiggap}. Namely
$$
S_{1,n}S_{2,n}S_{3,n}=\frac{x_{3,n}-x_{2,n}}{1-x_{2,n}}=O\left(\alpha_n\right).
$$
For proving point $5$, observe that, by points $2$ and $3$ of Lemma \ref{ss} we have
\begin{eqnarray*}
S_{2,n}S_{3,n}=\frac{S_{2,n-1}}{\ell_1}\left[\frac{\ell_1}{S_{2,n-1}}\cdot\frac{1-\left(\varphi_{n-1}^{-1}\circ q_{s_{n-1}}^{-1}\right)\left(1-S_{2,n-1}\right)}{1-\left(\varphi_{n-1}^{l}\left( S_{1,n-1}\right)\right)^{\ell_2}}\right].
\end{eqnarray*}
Observe that, by Proposition \ref{Prop:preimbiggap}, the sequence $\left\{S_{2,n}\right\}$  is bounded away from $1$. As a consequence, by Proposition \ref{affdiffeo}, there exists a constant $K>0$, depending only on $\delta$, such that 
\begin{equation}\label{1-varphiq}
1-\left(\varphi_{n-1}^{-1}\circ q_{s_{n-1}}^{-1}\right)\left(1-S_{2,n-1}\right)=\frac{S_{2,n-1}}{Dq_{s_{n-1}}\left(\theta_{n-1}\right)K}
\end{equation}
where $\left|\theta_n-1\right|\leq S_{2,n}$. 
As before, by the fact that the sequences $\left\{S_{1,n}\right\}$ and $\left\{S_{2,n}\right\}$ are bounded away from $1$, see Proposition \ref{Prop:preimbiggap}, by Proposition \ref{affdiffeo} and by point $5$ of Lemma \ref{changexs}, we get
\begin{equation}\label{Dq}
Dq_{s_{n-1}}\left(\theta_{n-1}\right)=\ell_1 K_1
\end{equation}
and
\begin{equation}\label{1-varphil}
\frac{1}{1-\left(\varphi_{n-1}^l\left(S_{1,n-1}\right)\right)^{\ell_2}}=K_2
\end{equation}
where $K_1$ and $K_2$ are positive constants depending only on $\delta$.
Point $5$ follows.
\end{proof}
\begin{prop}\label{ssn}
Fix $\delta>0$. Then for all $f\in\W_{\delta}$ and $n\in\N$, the following holds. If
$R^nf=(S_{1,n}, S_{2,n}, S_{3,n}, S_{4,n}, S_{5,n}, \varphi_n,\varphi^{l}_n,\varphi^{r}_n)$, then
\begin{eqnarray*}
1.& S_{1,n+1}=& \left[1-\left(\frac{\ell_{n+1} S_{1,n}^{\ell_n}}{S_{2,n}}\right)\right]O(1)\\
2.&  S_{2,n+1}=&\frac{ S_{1,n}S_{2,n}S_{3,n}}{\ell_{n+1} S_{5,n}}O(1)\\
3.&  S_{3,n+1}=&\frac{S_{5,n}}{S_{1,n}S_{3,n}}O(1)\\
 4.& S_{4,n+1}=&\frac{S_{1,n}^{\ell_n}}{S_{4,n}}O(1).\\
5.&  S_{5,n+1}=&S_{1,n}^{\ell_{n}-1}O(1).
\end{eqnarray*}

where $\ell_n=\ell_1$ for $n$ even and $\ell_n=\ell_2$ for $n$ odd. Moreover the constants in the orders depend only on $\delta$.
\end{prop}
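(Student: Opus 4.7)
The strategy is to apply Lemma \ref{ss} to $R^n f$ and show that each of the five bracketed factors appearing there is $O(1)$ with a constant depending only on $\delta$. Since the two boundary exponents of the flat piece are swapped at every renormalization step, when Lemma \ref{ss} is applied to $R^n f$ the symbols $\ell_1,\ell_2$ become $\ell_n,\ell_{n+1}$; this explains the parity-dependent indices in the statement.

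The two tools doing most of the work are Proposition \ref{affdiffeo} (the diffeomorphisms $\varphi_n,\varphi_n^l,\varphi_n^r$ have vanishing distortion, so in particular ratios such as $\varphi_n^l(S_{1,n})/S_{1,n}$ are uniformly comparable to $1$) and Proposition \ref{Prop:preimbiggap} together with Lemma \ref{prev} (the coordinates $S_{1,n}$ and $S_{2,n}$ stay uniformly bounded away from $1$, so factors like $1-(\varphi_n^l(S_{1,n}))^{\ell_{n+1}}$ stay bounded away from $0$).

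With these ingredients in hand, items $4$ and $5$ are immediate: the bracket is a power of $\varphi_n^l(S_{1,n})/S_{1,n}$, which is $O(1)$ by bounded distortion. For items $2$ and $3$ I would expand $\varphi_n^{-1}\circ q_{s_n}^{-1}$ using the mean value theorem together with the explicit form of $q_{s_n}$: the arguments $1-S_{2,n}$ and $S_{1,n}S_{2,n}S_{3,n}$ lie in regions where the derivative of $q_{s_n}$ has sizes comparable to $\ell_{n+1}$ and $\ell_{n+1}s_n^{\ell_{n+1}-1}=\ell_{n+1}S_{5,n}$ respectively, which cancels precisely the explicit prefactors sitting outside the brackets in Lemma \ref{ss}; the remaining diffeomorphic corrections are $O(1)$ by Proposition \ref{affdiffeo}, and the factor $1-(\varphi_n^l(S_{1,n}))^{\ell_{n+1}}$ is bounded away from $0$ by the preceding remark.

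The most delicate item is $1$, since the statement packages the entire quantity $1-\frac{\ell_{n+1}S_{1,n}^{\ell_n}}{S_{2,n}}$ as being multiplied by an $O(1)$ factor. To handle it I would repeat the expansion used in the proof of Lemma \ref{prev} (see equations (\ref{1-varphiq})--(\ref{Dq})): this rewrites $1-(\varphi_n^{-1}\circ q_{s_n}^{-1})(1-S_{2,n})$ as $S_{2,n}$ divided by a quantity comparable to $\ell_{n+1}$. The factor of $\ell_{n+1}$ produced in the first quotient of the bracket then cancels the $1/\ell_{n+1}$ hidden in the second quotient $(\varphi_n^l(S_{1,n}))^{\ell_n}/(\ell_{n+1}S_{1,n}^{\ell_n})$, and what remains is $O(1)$ by bounded distortion of $\varphi_n^l$. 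The main obstacle I anticipate is keeping track of these cancellations so that they fit exactly into the form given in the proposition rather than producing the slightly different expression $1-c_n\,\frac{\ell_{n+1}S_{1,n}^{\ell_n}}{S_{2,n}}$ with $c_n$ a bounded sequence.
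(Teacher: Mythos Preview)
Your approach is the same as the paper's: start from Lemma \ref{ss}, control the bracketed factors using Proposition \ref{affdiffeo} and the mean value theorem for $q_{s_n}$. Two points are worth flagging.

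On item 1, your anticipated obstacle is real and the paper does not circumvent it. What the paper actually proves (its equation (\ref{eq:s1s2relation})) is precisely your form
\[
S_{1,n+1}=1-\frac{\ell_{n+1} S_{1,n}^{\ell_n}}{S_{2,n}}\,O(1),
\]
i.e.\ $1-c_nX$ with $c_n$ bounded above and below. The packaging $[1-X]\cdot O(1)$ in the statement is loose; the only consequence used later is (\ref{S1S2}), namely that $\frac{\ell_{n+1}S_{1,n}^{\ell_n}}{S_{2,n}}$ lies between two positive constants, which follows from $0<S_{1,n+1}$ and $S_{1,n+1}$ bounded away from $1$.

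On item 3 your sketch skips a genuine step. The derivative $Dq_{s_n}(\zeta)\asymp\ell_{n+1}\big((1-s_n)\zeta+s_n\big)^{\ell_{n+1}-1}$ is comparable to $\ell_{n+1}S_{5,n}=\ell_{n+1}s_n^{\ell_{n+1}-1}$ only if the mean-value point satisfies $\zeta_n=O(s_n)$; without this the bracketed factor in Lemma \ref{ss}(3) need not be $O(1)$. Since $q_{s_n}(x)\ge x^{\ell_{n+1}}$, one only gets $\zeta_n\le(S_{1,n}S_{2,n}S_{3,n})^{1/\ell_{n+1}}O(1)$. The paper closes the gap by first deriving (\ref{S1S2}) from the item-1 computation, then combining it with point 5 of Lemma \ref{prev} and point 5 of the present proposition to obtain
\[
\frac{S_{1,n}S_{2,n}S_{3,n}}{s_n^{\ell_{n+1}}}=S_{1,n}\,O(1),
\]
which yields $\zeta_n/s_n=O\big(S_{1,n}^{1/\ell_{n+1}}\big)=O(1)$. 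You should make this dependency explicit; item 3 cannot be finished before the item-1 analysis produces (\ref{S1S2}).
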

\begin{proof}
As in the previous lemma, we assume that $n$ is even. As a consequence, in a left-sided neighborhood of $[x_{3,n},x_{4,n}]$, the critical exponent is ${\ell_1}$ and in a right-sided neighborhood the critical exponent is ${\ell_2}$. The proof in the case of $n$ odd is exactly the same, one just has to flip the exponents. Let us start by proving point $1$. By (\ref{1-varphiq}) and (\ref{Dq}), there exists a constant $K>0$, depending only on $\delta$, such that
$$
1-\left(\varphi_n^{-1}\circ q_{s_n}^{-1}\right)\left(1-S_{2,n}\right)=\frac{S_{2,n}}{\ell_2}K.
$$
Finally, by point $1$ of Lemma \ref{ss}, Proposition \ref{affdiffeo} and by the previous estimate we get
\begin{eqnarray}\label{eq:s1s2relation}
 S_{1,n+1}&=& 1-\left(\frac{\ell_2 S_{1,n}^{\ell_1}}{S_{2,n}}\right)\left[\frac{S_{2,n}}{1-\left(\varphi_n^{-1}\circ q_{s_n}^{-1}\right)\left(1- S_{2,n}\right)}\cdot\frac{\left(\varphi_n^{l}\left( S_{1,n}\right)\right)^{\ell_1}}{\ell_2 S_{1,n}^{\ell_1}}\right]
 \\\nonumber &=&1-\left(\frac{\ell_2 S_{1,n}^{\ell_1}}{S_{2,n}}\right)O(1).
\end{eqnarray}
Notice that the previous estimate 
$$
\frac{\left(\varphi_n^{l}\left( S_{1,n}\right)\right)}{ S_{1,n}}=O(1)
$$
proves point $4$ and $5$ by using point $4$ and $5$ of Lemma \ref{ss}.
Let us prove point $2$. By Proposition \ref{affdiffeo} there exists a constant $K_1>0$, depending only on $\delta$, such that
$$
\frac{\ell_2 S_{5,n}\left(\varphi_n^{-1}\circ q_{s_n}^{-1}\right)\left(S_{1,n}S_{2,n}S_{3,n}\right)}{S_{1,n}S_{2,n}S_{3,n}}\leq\frac{\ell_2 S_{5,n}}{Dq_{s_n}\left(0\right)K_1}.
$$
By the fact that the sequence $\left\{S_{1,n}\right\}$ is bounded away from $1$, see Proposition \ref{Prop:preimbiggap}, by Proposition \ref{affdiffeo} and by point $5$ of Lemma \ref{changexs}, we find
$$
Dq_{s_n}\left(0\right)=\ell_2 S_{5,n}K_2
$$
where $K_2$ is a uniform constant.
Finally, by point $2$ of Lemma \ref{ss}, by (\ref{1-varphil}) and by the previous two estimates we find that 
\begin{eqnarray*}
S_{2,n+1}&=&\frac{ S_{1,n}S_{2,n}S_{3,n}}{\ell_2 S_{5,n}}\left[\frac{\ell_2 S_{5,n}\left(\varphi_n^{-1}\circ q_{s_n}^{-1}\right)\left(S_{1,n}S_{2,n}S_{3,n}\right)}{S_{1,n}S_{2,n}S_{3,n}}\cdot\frac{1}{1-\left(\varphi_n^l\left(S_{1,n}\right)\right)^{\ell_1}}\right]\\&=&\frac{ S_{1,n}S_{2,n}S_{3,n}}{\ell_2 S_{5,n}}O(1).
\end{eqnarray*}
Notice that, by (\ref{eq:s1s2relation}), Proposition \ref{affdiffeo} and the fact that the sequences $\left\{S_{1,n}\right\}$ and $\left\{S_{2,n}\right\}$ are bounded away from $1$, there exist two constants $C_1$ and $C_2$, depending only on $\delta$, such that 
\begin{eqnarray}\label{S1S2}
C_1\leq \frac{\ell_2 S_{1,n}^{\ell_1}}{S_{2,n}}\leq C_2.
\end{eqnarray}
By point $5$ of Lemma \ref{prev}, (\ref{S1S2}) and point $5$ of this proposition we get 
\begin{equation}\label{S1S2S3}
\frac{S_{1,n}S_{2,n}S_{3,n}}{s_n^{\ell_2}}=S_{1,n}O(1).
\end{equation}
We are now ready to prove point $3$. Notice that, by Proposition \ref{affdiffeo}, there exists a positive constant $K_3$, depending only on $\delta$, such that 
$$
\left(\varphi_{n}^{-1}\circ q_{s_{n}}^{-1}\right)\left(S_{1,n}S_{2,n}S_{3,n}\right)=\frac{S_{1,n}S_{2,n}S_{3,n}}{Dq_{s_{n}}\left(\zeta_{n}\right)K_3}
$$
where $q_{s_{n}}\left(\zeta_n\right)\leq S_{1,n}S_{2,n}S_{3,n}K_3$. In particular, because $q_{s_{n}}\left(x\right)\geq x^{\ell_2}$, we get 
\begin{equation}\label{zeta}
0<\zeta_n\leq\left(S_{1,n}S_{2,n}S_{3,n}\right)^{\frac{1}{\ell_2}}K_3^{\frac{1}{\ell_2}}
\end{equation}
By Lemma \ref{ss}, (\ref{1-varphiq}) and the previous estimate we get
$$
S_{3,n+1}=\frac{S_{5,n}}{S_{1,n}S_{3,n}}\left[\frac{1}{S_{5,n}}\cdot\frac{\left(\left(1-s_n\right)\zeta_n+s_n\right)^{\ell_2-1}}{\left(\left(1-s_n\right)\theta_n+s_n\right)^{\ell_2-1}}O(1)\right].
$$
Now, by point $2$ and $3$ of Lemma \ref{prev} and by the definition of $S_{5,n}$
\begin{eqnarray*}
S_{3,n+1}&=&\frac{S_{5,n}}{S_{1,n}S_{3,n}}\left[\frac{\left(\left(1-s_n\right)\zeta_n+s_n\right)^{\ell_2-1}}{S_{5,n}}O(1)\right]\\&=&\frac{S_{5,n}}{S_{1,n}S_{3,n}}\left[\left(\left(1-s_n\right)\frac{\zeta_n}{s_n}+1\right)^{\ell_2 -1}O(1)\right].
\end{eqnarray*}
Finally by (\ref{zeta}), (\ref{S1S2S3}) and point $1$ of Lemma \ref{prev}, we find
\begin{eqnarray*}
S_{3,n+1}&=&\frac{S_{5,n}}{S_{1,n}S_{3,n}}O(1).
\end{eqnarray*}
Point $3$ follows.
\end{proof}
%
Let 
\begin{eqnarray*}
L_1&=&\left(\begin{matrix} 1+\frac{1}{\ell_1} & 1& 0& -1\\
 -\frac{1}{\ell_1} & -1& 0& 1\\
 1 & 0& -1& 0\\
 1-\frac{1}{\ell_1} & 0& 0& 0
 \end{matrix}\right),\\
 L_2&=&
 \left(\begin{matrix} 1+\frac{1}{\ell_2} & 1& 0& -1\\
 -\frac{1}{\ell_2} & -1& 0& 1\\
 1 & 0& -1& 0\\
 1-\frac{1}{\ell_2} & 0& 0& 0
 \end{matrix}\right),\\
L_{even} &=&\left(\begin{matrix} \frac{1}{\ell_1}+\frac{1}{\ell_2}+\frac{1}{\ell_1\ell_2} & \frac{1}{\ell_1}& 0& -\frac{1}{\ell_1}\\
 1-\frac{1}{\ell_1}-\frac{1}{\ell_1\ell_2} & 1-\frac{1}{\ell_1}& 0& \frac{1}{\ell_1}-1\\
 \frac{1}{\ell_2} & 1& 1& -1\\
 1-\frac{1}{\ell_1}-\frac{1}{\ell_1\ell_2}+\frac{1}{\ell_2} &  1-\frac{1}{\ell_1}& 0&  \frac{1}{\ell_1}-1
 \end{matrix}\right),\\
 L_{odd}&=&\left(\begin{matrix} \frac{1}{\ell_2}+\frac{1}{\ell_1}+\frac{1}{\ell_1\ell_2} & \frac{1}{\ell_2}& 0& -\frac{1}{\ell_2}\\
 1-\frac{1}{\ell_2}-\frac{1}{\ell_1\ell_2} & 1-\frac{1}{\ell_2}& 0& \frac{1}{\ell_2}-1\\
 \frac{1}{\ell_1} & 1& 1& -1\\
 1-\frac{1}{\ell_2}-\frac{1}{\ell_1\ell_2}+\frac{1}{\ell_1} &  1-\frac{1}{\ell_2}& 0&  \frac{1}{\ell_2}-1
 \end{matrix}\right).
\end{eqnarray*}
For all $n\in\N$ we defined 
\begin{eqnarray}\label{expr:wn}
w_n=\left(\begin{matrix}
\log S_{2,n}\\\log S_{3,n}\\\log S_{4,n}\\\log S_{5,n}
\end{matrix}
\right)=\left(\begin{matrix}
y_{2,n}\\ y_{3,n}\\ y_{4,n}\\ y_{5,n}
\end{matrix}
\right).
\end{eqnarray}

Equation (\ref{S1S2}) allows to eliminate $S_{1,n}$ which asymptotically is determined by $S_{2,n}$. With the notations introduced above, 
the new estimates of Lemma \ref{ssn} obtained by the substitution of $S_{1,n}$ takes the following linear form.
\begin{prop}\label{wn}
Fix $\delta>0$. Then for all $f\in\W_{\delta}$ and $n\in\N$, the sequence $w_n$ has the form
\begin{eqnarray}\label{wncond1}
w_{2n+2}&=&L_{even}w_{2n}+O\left(1\right)\\
\label{wncond2}
w_{2n+3}&=&L_{odd}w_{2n+1}+O\left(1\right)\\
\label{wncond3}
w_{2n+1}&=&L_1w_{2n}+O\left(1\right)\\
\label{wncond4}
w_{2n+2}&=&L_2w_{2n+1}+O\left(1\right).
\end{eqnarray}
 where the constants in the orders depend only on $\delta$.
\end{prop}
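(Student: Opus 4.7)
The proof reduces to taking logarithms in the multiplicative recursions of Proposition~\ref{ssn} and using the two-sided bound~(\ref{S1S2}) to eliminate the auxiliary variable $S_{1,n}$. First I would apply $\log$ to the five identities of Proposition~\ref{ssn}. Since each right-hand side is a monomial in $S_{1,n},\dots,S_{5,n}$ multiplied by a factor of the form $O(1)$, and since $\log\ell_{n+1}=O(1)$, the logged identities take the form of affine expressions in $\log S_{1,n}$, $y_{2,n}$, $y_{3,n}$, $y_{4,n}$, $y_{5,n}$ with an additive $O(1)$ error depending only on $\delta$.

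The key step is then to eliminate $\log S_{1,n}$. Taking logs of the two-sided bound $C_1\le \ell_{n+1} S_{1,n}^{\ell_n}/S_{2,n}\le C_2$ provided by~(\ref{S1S2}) yields the affine relation $\log S_{1,n}=y_{2,n}/\ell_n+O(1)$. Plugging this into the four logged recursions for $y_{2,n+1}, y_{3,n+1}, y_{4,n+1}, y_{5,n+1}$ produces a genuine linear relation $w_{n+1}=M_n w_n+O(1)$, the matrix $M_n$ being read off from the coefficients of the $y_{i,n}$. By inspection its rows coincide with those of $L_1$ when $\ell_n=\ell_1$ (i.e.\ $n$ even) and with those of $L_2$ when $\ell_n=\ell_2$ (i.e.\ $n$ odd). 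This establishes the one-step recursions (\ref{wncond3}) and (\ref{wncond4}).

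The two-step recursions (\ref{wncond1}) and (\ref{wncond2}) then follow by composing two consecutive one-step rules. Writing
$$
w_{n+2}=M_{n+1}\bigl(M_n w_n+O(1)\bigr)+O(1)=M_{n+1}M_n w_n+O(1),
$$
where the error remains $O(1)$ because $M_{n+1}$ has entries depending only on $\ell_1,\ell_2$, it remains to verify by a direct $4\times 4$ matrix multiplication that $M_{n+1}M_n$ agrees with $L_{even}$ for $n$ even and with $L_{odd}$ for $n$ odd.

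The main obstacle is not analytic but bookkeeping: one must track carefully the parity of the index, the convention for which of $\ell_1,\ell_2$ plays the role of the ``left'' exponent at each step, and the correct order of multiplication of $L_1$ and $L_2$ yielding $L_{even}$ and $L_{odd}$. All of the genuinely nontrivial work — distortion control, boundedness of the sequences $\alpha_n$ and $S_{2,n}$ away from $0$ and $1$, and the passage from a nonlinear to an essentially linear recursion — has already been packaged into Proposition~\ref{ssn} (via Proposition~\ref{affdiffeo} and Lemma~\ref{prev}), so once the substitution of $\log S_{1,n}$ is performed, the remaining steps are purely algebraic.
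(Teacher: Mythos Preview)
Your proposal is correct and matches the paper's approach exactly: the paper does not even give a separate proof, stating just before the proposition that ``Equation~(\ref{S1S2}) allows to eliminate $S_{1,n}$ which asymptotically is determined by $S_{2,n}$,'' and that the estimates of Proposition~\ref{ssn} then take the stated linear form. Your write-up simply makes this explicit, and the matrix computations you outline (yielding $L_1$, $L_2$, and then $L_{even}=L_2L_1$, $L_{odd}=L_1L_2$) are straightforward. One small inaccuracy in your closing parenthetical: $\alpha_n$ need not be bounded away from $0$ (indeed it tends to $0$ in the degenerate regime); what Proposition~\ref{ssn} actually uses is that $S_{1,n}$ and $S_{2,n}$ stay bounded away from $1$, together with the distortion control of Proposition~\ref{affdiffeo}---but this does not affect your argument.
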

\begin{rem}\label{rem:evensequence}
Observe that, because of properties (\ref{wncond3}) and  (\ref{wncond4}) we can restrict our analysis to the even sequence. The asymptotical behavior of the odd sequence can be expressed in the asymptotics of the even sequence and vice versa.
\end{rem}

\begin{lem}\label{eighvector}
The eigenvalues of $L_{even}$ (and $L_{odd}$) are $
\lambda_1=1, \lambda_0=0,$
\begin{eqnarray*}
\lambda_s &=&\frac{1}{2\ell_1\ell_2}\left\{1+\ell_1+\ell_2-\sqrt{\ell_1^2+\ell_2^2+2\ell_1+2\ell_2-
2\ell_1\ell_2+1}\right\}\in (0,1),\\
\lambda_u &=&\frac{1}{2\ell_1\ell_2}\left\{1+\ell_1+\ell_2+\sqrt{\ell_1^2+\ell_2^2+2\ell_1+2\ell_2-
2\ell_1\ell_2+1}\right\}>0.
\end{eqnarray*}
Moreover the eigenvectors of $L_{even}$, $ E_{1}=(e^1_i)_{i=2,3,4,5}$, $ E_{0}=(e^0_i)_{i=2,3,4,5}$, $ E_{s}=(e^s_i)_{i=2,3,4,5}$ and $ E_{u}=(e^u_i)_{i=2,3,4,5}$  corresponding to the eigenvalues $\lambda_1$, $\lambda_0$, $\lambda_s$ and $\lambda_u$ satisfy the following:
\begin{equation}\label{eq:e1andeu}
E_{1}=(0,0,1,0)\text{ and  }\forall i, e^u_i\neq 0.
\end{equation}
The eigenvectors of $L_{odd}$, $L_1E_{1}=(\hat e^1_i)_{i=2,3,4,5}$, $ L_1E_{0}=(\hat e^0_i)_{i=2,3,4,5}$, $ L_1E_{s}=(\hat e^s_i)_{i=2,3,4,5}$ and $L_1 E_{u}=(\hat e^u_i)_{i=2,3,4,5}$ also satisfy  (\ref{eq:e1andeu}).
\end{lem}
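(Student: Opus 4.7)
I would establish the claims for $L_{even}$ by direct linear-algebraic inspection, then transfer to $L_{odd}$ by checking the intertwining $L_1 L_{even} = L_{odd} L_1$ that is implicit in the recursion (\ref{wncond3})--(\ref{wncond1}) and which motivates the suggestive notation $\hat E_\star = L_1 E_\star$ appearing in the statement. Two of the four eigenvalues can be read off at a glance: the third column of $L_{even}$ is exactly $(0,0,1,0)^T$, which gives $L_{even} E_1 = E_1$ for $E_1 = (0,0,1,0)^T$; and the second and fourth columns of $L_{even}$ sum to zero, which places $(0,1,0,1)^T$ in $\ker L_{even}$ and produces $\lambda_0 = 0$. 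This already establishes the form of $E_1$ asserted in the statement.

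With these two eigenvalues in hand, the characteristic polynomial factors as $\lambda(\lambda-1)(\lambda^2 - T\lambda + D)$, so I can extract $T$ and $D$ by matching the coefficients against $\mathrm{tr}(L_{even})$ and $\mathrm{tr}(L_{even}^2)$. Direct calculation gives
\[
\mathrm{tr}(L_{even}) = 1 + \frac{1+\ell_1+\ell_2}{\ell_1\ell_2}, \qquad \mathrm{tr}(L_{even}^2) = T^2 + 1 - \frac{2}{\ell_1\ell_2},
\]
whence $T = (1+\ell_1+\ell_2)/(\ell_1\ell_2)$ and $D = 1/(\ell_1\ell_2)$. The quadratic formula applied to $\lambda^2 - T\lambda + D = 0$ then reproduces the announced closed expressions for $\lambda_s$ and $\lambda_u$, since $T^2 - 4D$ simplifies to the radicand in the statement. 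Positivity $\lambda_u > 0$ is immediate, $\lambda_s > 0$ follows from $\lambda_s \lambda_u = 1/(\ell_1\ell_2) > 0$, and $\lambda_s < 1$ reduces, after squaring in the only nontrivial regime $2\ell_1\ell_2 < 1 + \ell_1 + \ell_2$, to the inequality $\ell_1 + \ell_2 > \ell_1 \ell_2$, which is forced by $\ell_1,\ell_2 \ge 1$ combined with that regime assumption.

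For $E_u = (a,b,c,d)^T$, I would solve $L_{even} E_u = \lambda_u E_u$ row by row. Subtracting row 2 from row 4 yields the clean identity $a = \lambda_u \ell_2 (d-b)$; substituting this into row 3 gives $(\lambda_u - 1)c = (\lambda_u - 1)(d-b)$, so $c = d-b$ (assuming $\lambda_u \neq 1$, which is the generic situation and the only one where the statement is meaningful, since on the critical curve $\lambda_u = 1$ the eigenvector $E_u$ collides with $E_1$). Inserting $a = \lambda_u \ell_2 (d-b)$ back into row 2 produces a single affine relation $d \cdot A = b \cdot B$ with $A,B$ explicit polynomials in $(\lambda_u, \ell_1, \ell_2)$; using the quadratic identity $\lambda_u^2 - T\lambda_u + D = 0$, one verifies that $A \neq B$ and both are nonzero, forcing $b \neq d$, hence $c \neq 0$, and then successively $a \neq 0$ and $b, d \neq 0$.

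For $L_{odd}$, the intertwining identity $L_1 L_{even} = L_{odd} L_1$, which I would verify by direct entrywise multiplication, immediately gives that $L_1 E_\star$ is an eigenvector of $L_{odd}$ for the same eigenvalue as $E_\star$; the symmetry of the formulas for $\lambda_s, \lambda_u$ under $\ell_1 \leftrightarrow \ell_2$ is a consistency check. The claim that $L_1 E_1$ is proportional to $(0,0,1,0)^T$ follows from computing $L_1 E_1$ (which equals the third column of $L_1$), and the non-vanishing of the four entries of $\hat E_u = L_1 E_u$ follows by direct computation using the explicit form of $E_u$. The main obstacle is the last half of Step 3 and its analogue for $\hat E_u$: one has to rule out accidental cancellation of polynomial expressions in $(\lambda_u, \ell_1, \ell_2)$ on the admissible range $\ell_1, \ell_2 \ge 1$, using only the quadratic relation satisfied by $\lambda_u$ to simplify.
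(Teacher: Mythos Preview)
The paper gives no proof of this lemma; it is stated as a direct linear-algebraic computation and left to the reader. Your approach is correct and more detailed than anything the paper provides. One simplification: you propose to verify the intertwining $L_1 L_{even} = L_{odd} L_1$ by entrywise multiplication, but it follows structurally from the recursions (\ref{wncond1})--(\ref{wncond4}), which give $L_{even} = L_2 L_1$ and $L_{odd} = L_1 L_2$, whence $L_1 L_{even} = L_1 L_2 L_1 = L_{odd} L_1$ with no computation. Your caveat about $\lambda_u = 1$ is appropriate: the paper tacitly works off the curve $\Gamma$, and the subsequent applications (Theorems \ref{Theo:maintheo} and \ref{Prop:consequencesgeometry}) are stated for $(\ell_1,\ell_2)\in Q_-\cup Q_+$, so the collision $E_u = E_1$ does not arise.
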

Denote by $\Gamma$ the curve defined by the equation $\lambda_u(\ell_1,\ell_2)=1$. The following holds.
\begin{lem}
Let $Q$ be the quadrant defined by $\left(\ell_1,\ell_2\right)\in\left[1,+\infty\right)\times\left[1,+\infty\right)$. Then $Q\setminus\Gamma$ has two connected components,
$$Q\setminus\Gamma=Q_-\cup Q_+$$
satisfying the following
\begin{itemize}
\item[-] $Q_-$ and $Q_+$ are symmetric with respect to the diagonal,
\item[-] $\lambda_u>1$ on $Q_-$ and $\lambda_u<1$ on $Q_+$,
\item[-] for all ray $R_{\theta}$ starting in $(1,1)$, $R_{\theta}\cap Q_-$ and $R_{\theta}\cap Q_+$ are segments.
\end{itemize}
\end{lem}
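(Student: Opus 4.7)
The plan is first to reduce the implicit defining equation of $\Gamma$ to an explicit algebraic curve. Isolating the square root in the formula for $\lambda_u$ and squaring, the equation $\lambda_u(\ell_1,\ell_2)=1$ is equivalent to the pair of conditions $2\ell_1\ell_2-1-\ell_1-\ell_2\ge 0$ and
\[
\ell_1^2+\ell_2^2+2\ell_1+2\ell_2-2\ell_1\ell_2+1 \;=\; (2\ell_1\ell_2-1-\ell_1-\ell_2)^2.
\]
A routine expansion collapses the second identity to $\ell_1+\ell_2=\ell_1\ell_2$, so $\Gamma$ coincides within $Q$ with the hyperbolic curve
\[
\Gamma=\left\{(\ell_1,\ell_2)\in Q : \tfrac{1}{\ell_1}+\tfrac{1}{\ell_2}=1\right\}
\]
(the positivity condition $2\ell_1\ell_2\ge 1+\ell_1+\ell_2$ is automatic on this curve within $Q$). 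The symmetry with respect to the diagonal is then evident, since the equation is invariant under swapping $\ell_1$ and $\ell_2$.

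Second, this algebraic description identifies the two components. The curve $\Gamma$ is the graph of the smooth, strictly decreasing, strictly convex function $\ell_2=\ell_1/(\ell_1-1)$ on $(1,\infty)$, with asymptotes $\ell_1=1$ and $\ell_2=1$. Hence $\Gamma$ is a properly embedded arc going ``from infinity to infinity'' in $Q$, and it splits $Q$ into exactly two open connected pieces,
\[
Q_-=\left\{(\ell_1,\ell_2)\in Q : \tfrac{1}{\ell_1}+\tfrac{1}{\ell_2}>1\right\}, \qquad Q_+=\left\{(\ell_1,\ell_2)\in Q : \tfrac{1}{\ell_1}+\tfrac{1}{\ell_2}<1\right\}.
\]
Since $\lambda_u-1$ is continuous on $Q$ and vanishes only on $\Gamma$, its sign is constant on each component and is determined by a single test point. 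Evaluating at $(1,1)$ gives $\lambda_u=(3+\sqrt{5})/2>1$, so $\lambda_u>1$ on $Q_-$; evaluating at $(3,3)$ one checks $\lambda_u=(7+\sqrt{13})/18<1$, so $\lambda_u<1$ on $Q_+$.

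Third, for the ray property, fix a direction $\theta\in[0,\pi/2]$ and parametrize the ray $R_\theta$ from $(1,1)$ into $Q$ as $(\ell_1(t),\ell_2(t))=(1+t\cos\theta,\,1+t\sin\theta)$, $t\ge 0$. Set $g(t)=\tfrac{1}{\ell_1(t)}+\tfrac{1}{\ell_2(t)}$. Both summands are non-increasing in $t$, and at least one is strictly decreasing, so $g$ is strictly decreasing on $[0,\infty)$, with $g(0)=2$ and $\lim_{t\to\infty}g(t)\in\{0,1\}$. Therefore $R_\theta$ meets $\Gamma=\{g=1\}$ in at most one point, and correspondingly $R_\theta\cap Q_-$ and $R_\theta\cap Q_+$ are each connected: one is a bounded segment starting at $(1,1)$, the other a half-ray, or (in the degenerate axis-parallel case) $Q_+\cap R_\theta$ is empty and $R_\theta\subset\overline{Q_-}$.

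The main obstacle is essentially bookkeeping: the algebra reducing $\lambda_u=1$ to the hyperbola is straightforward but must be handled with care around the squaring step to justify the sign condition, and one must also check that $\Gamma$ meets $\partial Q$ only asymptotically so that it genuinely separates $Q$ into the two claimed components.
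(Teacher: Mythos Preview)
Your argument is correct. The reduction of $\lambda_u=1$ to the hyperbola $\tfrac{1}{\ell_1}+\tfrac{1}{\ell_2}=1$ checks out (squaring yields $4\ell_1\ell_2(\ell_1\ell_2-\ell_1-\ell_2)=0$, and the sign condition $2\ell_1\ell_2\ge 1+\ell_1+\ell_2$ reduces on the curve to $\ell_1+\ell_2\ge 1$, which is automatic), the test points are computed correctly, and the monotonicity of $t\mapsto \tfrac{1}{\ell_1(t)}+\tfrac{1}{\ell_2(t)}$ along each ray gives the claimed connectedness of $R_\theta\cap Q_\pm$.

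As for comparison with the paper: there is nothing to compare. The paper explicitly omits the proof (``We leave the proof of this elementary lemma to the reader''), supplying only the plot of $\Gamma$. Your approach, via the explicit algebraic description of $\Gamma$ as a branch of a hyperbola, is exactly the kind of direct computation the authors presumably had in mind, and it makes all three bullet points transparent. One small cosmetic point: in the axis-parallel case $\theta\in\{0,\pi/2\}$ the ray lies on $\partial Q$ and satisfies the strict inequality defining $Q_-$, so $R_\theta\subset Q_-$ outright rather than merely $R_\theta\subset\overline{Q_-}$; and $R_\theta\cap Q_+=\varnothing$ should perhaps be flagged as a (degenerate) ``segment'' to match the paper's phrasing.
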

\begin{proof} We leave the proof of this elementary lemma to the reader. The graph of the curve $\Gamma$ is plotted in Figure \ref{fig:plot}.
\end{proof}

 \begin{figure}
 \centering
  \includegraphics[width=10cm]{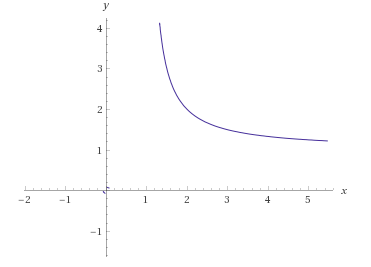}
  \caption{The curve $\Gamma$.}
  \label{fig:plot}
\end{figure}

\section{Statement and Proofs of the Main Theorems}
From Proposition \ref{wn} and Lemma \ref{eighvector} we have that, for all $n\in\N$, there exists $C_u(n)$, $C_s(n)$, $C_1(n)$, $C_0(n)$ such that 
\begin{equation}\label{eq:wnformula}
w_{2n}=C_u(n)E_u+C_s(n)E_s+C_1(n)E_1+C_0(n)E_0.
\end{equation}
Moreover $C_u(n)$, $C_s(n)$, $C_1(n)$ and $C_0(n)$ satisfy the following.
\begin{lem}\label{lem:Cnorders}
Fix $\delta>0$. Then for all $f\in\W_{\delta}$ and $n\in\N$,
$$
C_s(n)=O(1)\text{, }C_0(n)=O(1)\text{ and  }C_1(n)=O(n).
$$
Moreover, if $C_u(n)=O(1)$ then $C_1(n)=O(1)$.
\end{lem}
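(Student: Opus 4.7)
The proof naturally splits into the unconditional bounds on $C_s, C_0, C_1$ and the conditional improvement on $C_1$ under the hypothesis $C_u(n)=O(1)$. For the unconditional part, the approach is to write the recursion $w_{2n+2}=L_{even}w_{2n}+v_n$ of Proposition \ref{wn} in the eigenbasis $\{E_u, E_s, E_1, E_0\}$. Expanding $v_n=\tilde v_u(n)E_u+\tilde v_s(n)E_s+\tilde v_1(n)E_1+\tilde v_0(n)E_0$, each coefficient $\tilde v_\ast(n)=O(1)$ because the change of basis is a fixed linear map and $v_n=O(1)$. Substituting into (\ref{eq:wnformula}) yields four scalar recursions:
\begin{align*}
C_u(n+1)&=\lambda_u C_u(n)+\tilde v_u(n), & C_s(n+1)&=\lambda_s C_s(n)+\tilde v_s(n),\\
C_1(n+1)&=C_1(n)+\tilde v_1(n), & C_0(n+1)&=\tilde v_0(n).
\end{align*}
From these, $C_0(n)=O(1)$ follows immediately; $C_s(n)=O(1)$ because the contribution of the bounded forcing is a convergent geometric sum with ratio $\lambda_s\in(0,1)$; and $C_1(n)=O(n)$ by telescoping bounded increments.

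For the conditional part, assume $C_u(n)=O(1)$. Combined with the already-established $C_s(n), C_0(n)=O(1)$ and $E_1=(0,0,1,0)$, the expansion (\ref{eq:wnformula}) gives $y_{2,2n}, y_{3,2n}, y_{5,2n}=O(1)$. Applying (\ref{wncond3}) and noting that $L_1E_1=(0,0,-1,0)^T$ (the third column of $L_1$, consistent with Lemma \ref{eighvector}), these bounds propagate to odd iterates, so $y_{2,m}, y_{3,m}, y_{5,m}=O(1)$ for every $m\in\N$. A direct verification shows that the left eigenvector of $L_{even}$ for $\lambda_1=1$, normalized by $F_1\cdot E_1=1$, is $F_1=(0,1,1,-1)$; biorthogonality with $E_u, E_s, E_0$ then yields the explicit identity $C_1(n)=y_{3,2n}+y_{4,2n}-y_{5,2n}$. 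The problem therefore reduces to showing $y_{4,2n}=O(1)$.

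Proposition \ref{ssn} point 4 combined with (\ref{S1S2}) delivers $y_{4,n+1}+y_{4,n}=y_{2,n}+O(1)=O(1)$; however, this additive relation is a priori consistent with $y_{4,2n+2}-y_{4,2n}=O(1)$ and hence only gives $y_{4,2n}=O(n)$. The main obstacle is to upgrade this to a genuine uniform bound. I would address it by examining the precise $O(1)$ error in Proposition \ref{ssn} point 4, which by its proof equals $(\varphi_n^l(S_{1,n})/S_{1,n})^{\ell_n}$ and therefore depends only on $\varphi_n^l$ and $S_{1,n}$, not on $S_{4,n}$. Under the hypothesis $C_u(n)=O(1)$, the coordinates $S_{1,n}, S_{2,n}, S_{3,n}, S_{5,n}, s_n$ all lie in fixed compact subsets of their admissible ranges by Lemma \ref{prev} and the boundedness just obtained, while $f\in\W_{\delta}$ controls the diffeomorphic factors. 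The bounded-distortion estimates of Proposition \ref{affdiffeo} should then refine the discrepancy $y_{4,n+1}+y_{4,n}-y_{2,n}$ into a summable sequence, producing the required cancellation in $y_{4,2n}=y_{4,0}+\sum_{k=0}^{n-1}(y_{4,2k+2}-y_{4,2k})$ and hence the desired $C_1(n)=O(1)$.
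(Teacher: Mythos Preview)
Your treatment of the unconditional bounds is correct and coincides with the paper's argument: write the recursion of Proposition~\ref{wn} in the eigenbasis and read off $C_s(n)=O(1)$, $C_0(n)=O(1)$, $C_1(n)=O(n)$ from $\lambda_s\in(0,1)$, $\lambda_0=0$, $\lambda_1=1$. Your reduction of the conditional statement to $y_{4,2n}=O(1)$ via the left eigenvector $F_1=(0,1,1,-1)$ is also valid and essentially equivalent to what the paper does implicitly.

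The gap is in the last paragraph. Your plan is to show that the discrepancies $y_{4,n+1}+y_{4,n}-y_{2,n}$ form a summable sequence by invoking Proposition~\ref{affdiffeo}. This cannot succeed: under the hypothesis $C_u(n)=O(1)$ you have already established that $S_{1,n},S_{2,n},S_{3,n},S_{5,n}$ lie in fixed compacta, and then \eqref{eq:alphan} forces $\alpha_n$ to be bounded \emph{away from zero}. Consequently the distortion bounds $O(\alpha_n^{1/\ell})$ of Proposition~\ref{affdiffeo} are merely $O(1)$, not summable. More structurally, the $O(1)$ factor in point~4 of Proposition~\ref{ssn} is $(\varphi_n^l(S_{1,n})/S_{1,n})^{\ell_n}$, which is independent of $S_{4,n}$; the scalar recursion $y_{4,n+1}+y_{4,n}=O(1)$ is therefore invariant under adding a constant to $(y_{4,2k})_k$ and subtracting it from $(y_{4,2k+1})_k$, so no refinement of that recursion alone can pin down $y_{4,2n}$.

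What breaks this gauge freedom in the paper is an \emph{a priori geometric} input you have not used: Proposition~\ref{Prop:preimbiggap} gives directly an upper bound $S_{4,n}=O(1)$ for all $n$ (recall $S_{4,n}=x_{2,n}/(-x_{1,n})$). With $S_{4,2n-1}$ bounded from above and $S_{1,2n-1}$ bounded from below, point~4 of Proposition~\ref{ssn} then yields a lower bound on $S_{4,2n}$. Two-sided bounds on $S_{4,2n}$ give $y_{4,2n}=O(1)$ and hence $C_1(n)=O(1)$.
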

\begin{proof}
Observe that $C_s(n+1)=\lambda_s C_s(n)+O(1)$ and that $\lambda_s<1$. Similar formulas hold also for $C_0(n+1)$ and for $C_1(n+1)$ using the corresponding eigenvalues. The first statement of the lemma then follows. 

Suppose now that $C_u(n)=O(1)$. Then, by recalling the expression for $w_n$, see (\ref{expr:wn}), by (\ref{eq:wnformula}) and (\ref{eq:e1andeu}), there exists a constant $K$ such that $\frac{1}{K}\leq S_{2,2n},S_{3,2n},S_{5,2n}\leq K$. We use now (\ref{wncond3}) and we find that, the same kind of bounds hold also for $ S_{2,2n-1},S_{3,2n-1}$ and $S_{5,2n-1}$. Moreover, because of (\ref{S1S2}), also the sequence $S_{1,n}$ is bounded from below and from above. Observe now that, by Proposition \ref{Prop:preimbiggap}, $S_{4,2n}$ and $S_{4,2n-1}$ are bounded from above for all $n$. As a consequence, by using point $4$ of Proposition \ref{ssn} and the fact that $S_{1,2n-1}$ is bounded from below, we get a lower bound for $S_{4,2n}$. We have than that $S_{4,2n}$ is bounded from below and from above. Hence, by using again  (\ref{expr:wn}), (\ref{eq:wnformula}) and (\ref{eq:e1andeu}), that $C_1(n)=O(1)$.
\end{proof}
\begin{defin} \label{def:boundeddegenerate}Fix $\delta>0$ and take $f\in\W_{\delta}$.
We say that the sequence $\left\{R^n f\right\}_{n\in\N}$ is bounded if the $\limsup_{n\to\infty }C_u(n)<\infty$.
We say that the sequence $\left\{R^n f\right\}_{n\in\N}$ is degenerate if $\limsup_{n\to\infty }C_u(n)=\infty$.
\end{defin}
\begin{theo}\label{Theo:maintheo}
Fix $\delta>0$. Then for all $f\in\W_{\delta}$ and $n\in\N$, the following holds.
\begin{itemize}
\item[-] If $\ell_1,\ell_2\in Q_+$ then the sequence $\left\{R^n f\right\}_{n\in\N}$ is bounded.
\item[-] If $\ell_1,\ell_2\in Q_-$ then either the sequence $\left\{R^n f\right\}_{n\in\N}$ is bounded or it is degenerate. In particular, there exists $K_{\delta}>0$ such that, if $C_u(0)\geq K_{\delta}$ then the sequence $\left\{R^n f\right\}_{n\in\N}$ is degenerate.
\end{itemize}

\end{theo}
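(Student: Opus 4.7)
The plan is to reduce the analysis of $\{R^n f\}$ to a one-dimensional scalar recursion for the unstable coordinate $C_u(n)$. Substituting the eigenvector expansion (\ref{eq:wnformula}) into the evolution relation $w_{2n+2}=L_{even}w_{2n}+O(1)$ from Proposition \ref{wn} and using that $L_{even}$ acts diagonally in the eigenbasis provided by Lemma \ref{eighvector}, the projection onto $E_u$ yields
\[
C_u(n+1)=\lambda_u\,C_u(n)+\epsilon_n,
\]
where $\epsilon_n$ is the $E_u$-component of the bounded error term. Since the change of basis from standard coordinates to the eigenbasis of $L_{even}$ depends only on $(\ell_1,\ell_2)$, the uniform $O(1)$ estimate of Proposition \ref{wn} transfers to a uniform bound $|\epsilon_n|\le M_\delta$ with $M_\delta$ depending only on $\delta$.

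For the first bullet, $(\ell_1,\ell_2)\in Q_+$ means $\lambda_u<1$, so the scalar recursion is a contraction up to a bounded perturbation. Iteration gives
\[
|C_u(n)|\le \lambda_u^n|C_u(0)|+\frac{M_\delta}{1-\lambda_u},
\]
whence $\limsup_{n\to\infty}C_u(n)<\infty$ and the sequence of renormalizations is bounded in the sense of Definition \ref{def:boundeddegenerate}.

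For the second bullet the dichotomy \emph{bounded or degenerate} is tautological from Definition \ref{def:boundeddegenerate}, so the substantive part is the existence of the threshold $K_\delta$. Here $\lambda_u>1$ and solving the recursion explicitly gives
\[
C_u(n)=\lambda_u^n\left(C_u(0)+\sum_{k=0}^{n-1}\frac{\epsilon_k}{\lambda_u^{k+1}}\right),
\]
with the series inside absolutely convergent and bounded in modulus by $M_\delta/(\lambda_u-1)$. Setting $K_\delta:=\dfrac{M_\delta}{\lambda_u-1}+1$, the assumption $C_u(0)\ge K_\delta$ forces the parenthesized factor to remain at least $1$ for every $n$; hence $C_u(n)\ge \lambda_u^n\to\infty$, i.e.\ $\{R^n f\}$ is degenerate.

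The only real obstacle is the first step: establishing the uniform $\delta$-bound on $\epsilon_n$. This follows by combining the $\delta$-uniformity of the $O(1)$ asserted in Proposition \ref{wn} with the fact that the eigenvectors of $L_{even}$ (and the associated projection onto $E_u$) are fixed functions of $(\ell_1,\ell_2)$, so the change of basis neither depends on $f$ nor on $n$. Once this uniform recursion is in hand the remaining analysis in $Q_+$ and in $Q_-$ is elementary scalar linear dynamics, as carried out above.
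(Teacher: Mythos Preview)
Your proof is correct and follows essentially the same approach as the paper: both reduce to the scalar recursion $C_u(n+1)=\lambda_u C_u(n)+O(1)$ obtained by projecting Proposition~\ref{wn} onto $E_u$, and then analyze the cases $\lambda_u<1$ and $\lambda_u>1$ by elementary iteration. Your treatment is in fact slightly cleaner in two places: you observe that the ``bounded or degenerate'' dichotomy is immediate from Definition~\ref{def:boundeddegenerate} (the paper instead re-derives it from the threshold argument), and your explicit closed-form solution of the recursion gives a tidier threshold $K_\delta=M_\delta/(\lambda_u-1)+1$ than the paper's choice $K_\delta=2\sum C_\delta/\lambda_u^n$.
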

\begin{proof}
Suppose  $\ell_1,\ell_2\in Q_+$. Then, by the equation $C_u(n+1)=\lambda_uC_u(n)+O(1)$ and because $\lambda_u<1$, we have that $C_u(n)$ is bounded by a positive constant. The statement follows. Suppose  $\ell_1,\ell_2\in Q_-$. We start to prove that if $C_u(0)$ is large enough, then the sequence $\left\{R^n f\right\}_{n\in\N}$ is degenerate.
Observe that $C_u(n+1)=\lambda_u C_u(n)+C_{\delta}$ where $C_{\delta}$ is a constant depending only on $\delta$ and $\lambda_u>1$. Define the sequence $\left\{c_n\right\}$ such that, for all $n\in\N$, $C_u(n)=c_n\lambda_u^n$. As a consequence,
$$
c_n\ge c_0-\sum\frac{C_{\delta}}{\lambda^n_u}=C_u(0)-\sum\frac{C_{\delta}}{\lambda^n_u}.
$$
In particular, if $C_u(0)>2\sum\frac{C_{\delta}}{\lambda^n_u}=K_{\delta}$, then for all $n\in\N$,
$$
C_u(n)\ge \lambda_u^n\sum\frac{C_{\delta}}{\lambda^n_u}.
$$
Hence, $\limsup_{n\to\infty}C_u(n)=\infty$ and the sequence $\left\{R^n f\right\}_{n\in\N}$ is degenerate. In order to prove the dichotomy, observe that if $C_u(n)$ is bounded then, the sequence $\left\{R^n f\right\}_{n\in\N}$ is bounded, otherwise there exists $n_0$ such that $C_u(n_0)>K_{\delta}$ and by the previous argument, the sequence $\left\{R^{n} \left(R^{n_0}f\right)\right\}_{n\in\N}$ is degenerate. Hence $\left\{R^{n} f\right\}_{n\in\N}$ is degenerate
\end{proof}
In the following we explore the consequences of the asymptotical behavior of the renormalization operator on the geometry of the system.

\begin{lem}\label{lem:limcun}
Fix $\delta>0$. Then for all $f\in\W_{\delta}$ and $n\in\N$, if $\ell_1,\ell_2\in Q_{-}$, we have that the limit
$$
\lim_{n\to\infty}\frac{C_u(n)}{\lambda_u^n}
$$
exists.
\end{lem}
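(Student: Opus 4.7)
The plan is to exploit the fact that, when projected onto the unstable eigendirection, the renormalization recursion reduces to a scalar affine recursion with slope $\lambda_u$ and a uniformly bounded inhomogeneous term, and then to invoke a telescoping argument that converges geometrically because $\lambda_u>1$ on $Q_-$.

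First I would combine Proposition \ref{wn} with the eigendecomposition (\ref{eq:wnformula}) and project the recursion $w_{2n+2}=L_{even}w_{2n}+O(1)$ onto the $E_u$--coordinate. Because $L_{even}E_u=\lambda_u E_u$ and the inhomogeneous term is bounded uniformly in $n$ by a constant depending only on $\delta$, one obtains a recursion of the form
\begin{equation*}
C_u(n+1)=\lambda_u C_u(n)+\varepsilon_n,\qquad |\varepsilon_n|\le C_\delta,
\end{equation*}
where $C_\delta$ depends only on $\delta$. This is the same recursion used in the proof of Theorem \ref{Theo:maintheo}, now with explicit control of the error.

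Next, set $c_n:=C_u(n)/\lambda_u^n$. A direct computation gives
\begin{equation*}
c_{n+1}-c_n=\frac{\varepsilon_n}{\lambda_u^{n+1}}.
\end{equation*}
Since $\ell_1,\ell_2\in Q_-$ we have $\lambda_u>1$, and therefore
\begin{equation*}
\sum_{n=0}^{\infty}\left|c_{n+1}-c_n\right|\le \sum_{n=0}^{\infty}\frac{C_\delta}{\lambda_u^{n+1}}=\frac{C_\delta}{\lambda_u-1}<\infty.
\end{equation*}
Hence $(c_n)$ is a Cauchy sequence in $\R$ and converges to a finite limit, which is precisely the claimed limit $\lim_{n\to\infty}C_u(n)/\lambda_u^n$.

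There is essentially no serious obstacle: the only thing to verify carefully is that the $O(1)$ term coming from Proposition \ref{wn}, after projection onto $E_u$, remains uniformly bounded by a constant depending only on $\delta$. This follows because the projection onto a fixed eigenvector is a bounded linear functional on $\R^4$ and the $O(1)$ in Proposition \ref{wn} is, by its very statement, uniform in $n$ and depends only on $\delta$. The assumption $\lambda_u>1$ on $Q_-$ is what makes the telescoping series geometrically summable; without it the argument would fail (indeed on $\Gamma$ one has $\lambda_u=1$ and the limit need not exist).
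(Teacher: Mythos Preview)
Your proof is correct and follows essentially the same approach as the paper: starting from the scalar recursion $C_u(n+1)=\lambda_u C_u(n)+O(1)$, rescaling by $\lambda_u^n$ to get $c_n=C_u(n)/\lambda_u^n$, and then observing that the increments $c_{n+1}-c_n$ are summable because $\lambda_u>1$ on $Q_-$. The only difference is cosmetic---you spell out the telescoping and Cauchy argument explicitly, whereas the paper simply notes that $c_{n+1}=c_n+O(1/\lambda_u^n)$ and concludes.
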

\begin{proof}
Observe that $C_u(n+1)=\lambda_u C_u(n)+O(1)$. Define the sequence $\left\{c_n\right\}$ such that, for all $n\in\N$, $C_u(n)=c_n\lambda_u^n$. As a consequence
$$
c_{n+1}=c_n+O\left(\frac{1}{\lambda_u^n}\right)
$$
and because $\lambda_u>1$, the sequence $\left\{c_n\right\}$ converges.
\end{proof}
Fix $\delta>0$ and take $f\in W_{\delta}$ with critical exponents $\ell_1,\ell_2\in Q_-$. We denote the limit from Lemma \ref{lem:limcun} by
$$
G_u(f):=\lim\frac{C_u(n)}{\lambda_u^n}\in\R.
$$
We define in the following the non-wandering set\footnote{The non wandering set of a map $f$ is the set of the points $x$ such that for any open neighborhood $V\ni x$ there exists an integer $n>0$ such that the intersection of $V$ and $f^n(V)$ is non-empty.} for a map in our class.
Fix $\delta>0$. For any function $f\in\W_{\delta}$, the non-wandering set of $f$ is 
\begin{equation}\label{Def:nonwandering}
K_f=\S\setminus\cup_{i\geq 0}f^{-i}(U_f)
\end{equation}
 where  $U_f=[x_3,x_4]$ is the flat interval of $f$.
The proof of the next lemma is obtained by following the same arguments as in \cite{5aut} and \cite{MartensdeMeloMendesvanStrien}.
\begin{lem}
The non-wandering set $K_f$ is a Cantor set. 
\end{lem}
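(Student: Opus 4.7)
The plan is to verify that $K_f$ satisfies the three defining properties of a Cantor subset of the circle: it is compact, it has empty interior in $\mathbb{S}^1$, and it is perfect. Compactness is essentially immediate. Replacing $U_f$ by its interior in the definition of $K_f$ only alters the set by the countable collection of preimages of the endpoints $\{x_3,x_4\}$, which can be put back into $K_f$ without affecting its topological properties; with this convention the removed set $\bigcup_{i\ge 0}f^{-i}(\mathrm{int}\, U_f)$ is a countable union of open intervals, so $K_f$ is a closed subset of the compact circle, hence compact. Non-emptiness follows from the same partition argument used below, since the partition endpoints themselves form an infinite set all lying in the closure of $K_f$.

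For the empty interior I would use the Fibonacci dynamical partition described in Subsection \ref{Fib}. For each $n$ the dynamical points $\hat x_{k,j}$ (for $k\in\{1,2,3,4\}$, $j\le n$) together with the preimages $f^{-j}(U_f)$ of the flat interval for $0\le j\le q_n+q_{n+1}-1$ split the circle into finitely many gaps and pieces that are preimages of $U_f$. By Proposition \ref{Prop:gapsgotozero} combined with Proposition \ref{Prop:preimbiggap}, the lengths of \emph{all} the gaps in this partition tend uniformly to $0$ as $n\to\infty$. Consequently any open arc $J\subset\mathbb{S}^1$ contains, for $n$ large enough, an entire preimage of $U_f$, which shows $J\not\subset K_f$ and hence that $K_f$ has empty interior.

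To prove perfectness, fix $x\in K_f$ and a neighborhood $J$ of $x$. Choose $n$ large enough that the element $I_n^x$ of the level-$n$ partition containing $x$ lies inside $J$. Its endpoints are preimages of $\{x_3,x_4\}$ and therefore outside $K_f$; however, refining to level $n+k$ introduces further preimages of $U_f$ inside $I_n^x$, and the complementary gaps of this finer partition form new intervals of $I_n^x$ distinct from any preimage of $U_f$. Since each such gap contains at least one point of $K_f$ (by the empty interior step applied at the next level), and since the gaps shrink to $0$ in diameter, letting $k\to\infty$ produces a sequence in $K_f\setminus\{x\}$ converging to $x$ from within $J$.

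The main obstacle is the perfectness step, specifically ruling out that every refinement gap around $x$ collapses onto $x$ itself. This reduces to showing that each element of the Fibonacci partition meets $K_f$ in more than a single point, equivalently that $x$ is not an eventual image of a boundary point of $U_f$: here the Fibonacci combinatorics and the irrationality of the rotation number encoded by the renormalization scheme of Section \ref{section:renorm} ensure that the forward orbit of $\{x_3,x_4\}$ is infinite and disjoint from $x\in K_f$, so the argument closes.
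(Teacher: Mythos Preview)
The paper does not give a self-contained proof here; it simply refers to \cite{5aut} and \cite{MartensdeMeloMendesvanStrien}. The standard route in those references is to use that a degree-one circle map with irrational rotation number is monotonically semi-conjugate to the rigid rotation $R_\rho$, so that it possesses a unique minimal set which is either the whole circle or a Cantor set; the presence of the wandering interval $U_f$ rules out the first alternative, and one then identifies $K_f$ (up to the countable set of gap endpoints) with that minimal set. Your approach is different: you try to verify compactness, empty interior and perfectness directly from the Fibonacci partition. The outline is legitimate, but two steps need repair.

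\medskip
\textbf{Empty interior.} You invoke Propositions~\ref{Prop:gapsgotozero} and~\ref{Prop:preimbiggap} to conclude that \emph{all} elements of the level-$n$ dynamical partition shrink to zero. Those propositions only control the intervals adjacent to the critical value $0$ (i.e.\ the intervals $[0,\hat x_{2,n}]$, $[0,\hat x_{3,n}]$, $[\hat x_{3,n},\hat x_{4,n}]$, etc.). To propagate smallness to every partition element around the circle you need the bounded-distortion input of Proposition~\ref{Koebe} (the Koebe principle), or else the semi-conjugacy itself; as written, the inference does not follow from the two propositions you cite.

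\medskip
\textbf{Perfectness.} The sentence ``each such gap contains at least one point of $K_f$ (by the empty interior step applied at the next level)'' is backwards: the empty-interior step produces points \emph{outside} $K_f$ in a given arc, not points inside $K_f$. What you actually need is that each gap of the level-$(n+k)$ partition contained in $I_n^x$ meets $K_f$; this follows from a nested-intersection/compactness argument (the closures of the gaps at successive levels form a decreasing sequence of non-empty compacta, and any point in the intersection avoids every $f^{-i}(U_f)$, hence lies in $K_f$), not from the empty-interior step. Your final paragraph, meant to handle the possibility that all refined gaps collapse onto $x$, is also off: the relevant fact is not that the forward orbit of $\{x_3,x_4\}$ is disjoint from $K_f$ (the image $0=f(x_3)=f(x_4)$ actually lies in $K_f$), but rather that the Fibonacci combinatorics forces each non-gap partition element to be genuinely subdivided at the next level, producing at least two disjoint sub-gaps, only one of which can contain $x$. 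With these two fixes your direct argument goes through; alternatively, the semi-conjugacy/minimality argument of the cited references avoids both issues in one stroke.
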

The following theorem gives more specific and geometrical consequences of the concepts of "bounded" and "degenerate" behavior of the renormalizations, see Definition \ref{def:boundeddegenerate}.
\begin{theo} \label{Prop:consequencesgeometry}Fix $\delta>0$ and let $f\in\W_{\delta}$. 
If the sequence $\left\{R^n f\right\}_{n\in\N}$ is bounded, then 
\begin{itemize}
\item[-]$|w_n|=O(1)$,
\item[-] $\text{dist}\left(\varphi_n\right), \text{dist}\left(\varphi^l_n\right), \text{dist}\left(\varphi^r_n\right)=O(1)$,
\item[-] the non-wandering set $K_{f}$ has strictly positive Hausdorff dimension.
\end{itemize}
If the sequence $\left\{R^n f\right\}_{n\in\N}$ is degenerate, then $\lambda_u\ge 1$. In particular, if $\lambda_u>1$, then
\begin{itemize}
\item[-]$w_{2n}=G_u(f)\lambda_u^n\left(E_u+o(1)\right)$, 
\item[-] $w_{2n+1}=G_u(f)\lambda_u^n\left(L_1E_u+o(1)\right)$,
\item[-] $G_u(f)<0$,
\item[-] $\alpha_{2n}=e^{G_u(f)\lambda_u^{n}\left(e^2_u+e^3_u+o(1)\right)}$,
\item[-] $\alpha_{2n+1}=e^{G_u(f)\lambda_u^{n}\left(\hat e^2_u+\hat e^3_u+o(1)\right)}$,
\item[-] $\text{dist}\left(\varphi_n\right), \text{dist}\left(\varphi^l_n\right), \text{dist}\left(\varphi^r_n\right)=O\left(e^{G_u(f)\lambda_u^n\left(g+o(1)\right)}\right)$ where $g>0$,
\item[-] the non-wandering set $K_f$ has zero Hausdorff dimension.
\end{itemize}
\end{theo}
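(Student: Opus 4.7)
My plan is to extract everything from Lemma \ref{lem:Cnorders} applied to the decomposition (\ref{eq:wnformula}). The hypothesis $\limsup C_u(n)<\infty$ gives $C_u(n)=O(1)$, and the lemma then delivers $C_s(n),C_0(n),C_1(n)=O(1)$, hence $|w_{2n}|=O(1)$; relation (\ref{wncond3}) propagates this to odd indices, proving $|w_n|=O(1)$. Since the coordinates of $w_n$ are $\log S_{i,n}$, each $S_{i,n}$ (including $S_{1,n}$ via (\ref{S1S2})) stays in a compact subset of its natural domain, and in particular $S_{2,n}$ stays bounded away from $1$. Plugging into (\ref{eq:alphan}) yields $\alpha_n=O(1)$, and then Proposition \ref{affdiffeo} gives the distortion bounds. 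For positivity of the Hausdorff dimension I plan a standard bounded-geometry argument: at every renormalization level we have two subintervals inside the parent each occupying a definite fraction of it (because all the $S_{i,n}$ are bounded above and below), producing an iterated-function-system-like structure on $K_f$ with ratios uniformly in $(0,1)$, so the Moran covering lower bound gives $\dim_H K_f>0$.

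\textbf{Degenerate case, asymptotics of $w_n$.} If $\lambda_u<1$ then $C_u(n+1)=\lambda_u C_u(n)+O(1)$ implies $C_u(n)=O(1)$, contradicting degeneracy; hence $\lambda_u\ge 1$. Assuming $\lambda_u>1$, Lemma \ref{lem:limcun} gives the limit $G_u(f)$, and degeneracy forces $G_u(f)\ne 0$. Dividing (\ref{eq:wnformula}) by $\lambda_u^n$ and using Lemma \ref{lem:Cnorders} (so the $E_s$, $E_0$, $E_1$ contributions are $O(\lambda_u^{-n})$ and $O(n\lambda_u^{-n})$), we obtain $w_{2n}/\lambda_u^n\to G_u(f)E_u$, i.e. $w_{2n}=G_u(f)\lambda_u^n(E_u+o(1))$; (\ref{wncond3}) then gives $w_{2n+1}=L_1 w_{2n}+O(1)=G_u(f)\lambda_u^n(L_1E_u+o(1))$. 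For the sign of $G_u(f)$ I will use the structural constraint that the second coordinate of $w_{2n}$ is $\log S_{2,2n}<0$: since it diverges and the leading term is $G_u(f)\lambda_u^n e^u_2$, we must have $G_u(f)\cdot e^u_2<0$. A direct computation of $E_u$ from the explicit matrix $L_{even}$ (checking that the second component is positive) then gives $G_u(f)<0$. I expect this sign analysis to be the most delicate step.

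\textbf{Gap ratios, distortion, and Hausdorff dimension.} With $G_u(f)<0$ and every $e^u_i\ne 0$, the components $y_{i,2n}$ with $e^u_i>0$ diverge to $-\infty$, so $S_{2,2n},S_{3,2n},S_{5,2n}\to 0$, and Lemma \ref{prev}(1) yields $S_{1,2n}\to 0$. The denominator in (\ref{eq:alphan}) thus tends to $1$, giving
\[
\log\alpha_{2n}=y_{2,2n}+y_{3,2n}+o(1)=G_u(f)\lambda_u^n\bigl(e^u_2+e^u_3+o(1)\bigr),
\]
and the analogous computation on the odd subsequence, using $\hat E_u=L_1E_u$, produces the formula for $\alpha_{2n+1}$. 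Substituting these into Proposition \ref{affdiffeo} yields the distortion asymptotics with a suitable $g>0$ (one uses $G_u(f)<0$ to see that the overall exponent is negative, so the distortions actually decay double-exponentially). For $\dim_H K_f=0$: the non-wandering set at level $n$ is covered by $O(q_n)$ intervals whose lengths are bounded, up to uniform constants, by products of the scaling ratios; because $\alpha_n\asymp\exp(-c\lambda_u^n)$ decays double-exponentially while $q_n$ grows only exponentially, the sum $\sum |I|^s$ of any such cover tends to $0$ for every $s>0$. The main obstacles I anticipate are (i) the sign verification for $E_u$ and $L_1E_u$ (to obtain $G_u(f)<0$ and $g>0$), and (ii) translating the renormalization-level asymptotics into explicit covers of $K_f$ to conclude the two Hausdorff-dimension statements.
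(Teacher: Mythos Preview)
Your proposal is correct and follows essentially the same route as the paper's proof: both derive $|w_n|=O(1)$ from Lemma~\ref{lem:Cnorders} and (\ref{eq:wnformula}), pull the distortion bounds from Proposition~\ref{affdiffeo}, and in the degenerate case divide (\ref{eq:wnformula}) by $\lambda_u^n$ (using Lemma~\ref{lem:Cnorders} and Lemma~\ref{lem:limcun}) to obtain the asymptotics of $w_n$ and hence of $\alpha_n$. The only difference is that the paper delegates the two Hausdorff-dimension conclusions to Theorems~1.4 and~1.5 of \cite{P1} (pointing out they depend only on whether $\alpha_n$ stays bounded away from zero or decays), while you sketch the bounded-geometry lower bound and the double-exponential covering argument directly; your sketches are in the right spirit and match what those cited theorems do.
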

\begin{proof}
Suppose that the sequence $\left\{R^n f\right\}_{n\in\N}$ is bounded, then $C_u(n)=O(1)$ and by Lemma \ref{lem:Cnorders}, $C_s(n), C_0(n),C_1(n)=O(1)$. As a consequence, by Remark \ref{rem:evensequence}, $|w_n|=O(1)$. The bounds on the diffeomorphisms are a straight consequence of Proposition \ref{affdiffeo}. The proof of the positivity of the Hausdorff dimension is exactly the same as the one in Theorem 1.5 of \cite{P1} where the author proves the same statement for circle maps with a flat piece and the same order of criticality at the boundary points of it. The positivity of the Hausdorff dimension is only consequence of the fact that the sequence $\alpha_n$ is bounded away form zero, which is now the case also in our more general context.

If the sequence $\left\{R^n f\right\}_{n\in\N}$ is degenerate, by Theorem \ref{Theo:maintheo}, the critical exponents of $f$ are not in $Q_+$ and as a consequence, $\lambda_u\geq 1$. Let us assume that $\lambda_u>1$. By using (\ref{eq:wnformula}) and Lemma \ref{lem:Cnorders}, we get
$$
\lim_{n\to\infty}\frac{w_{2n}}{\lambda_u^n}=G_u(f)E_u.
$$
It follows that 
$$
w_{2n}=G_u(f)\lambda_u^n\left(E_u+o(1)\right) 
$$
and $G_u(f)<0$. The formula for $w_{2n+1}$ comes from Remark \ref{rem:evensequence}. The equality for $\alpha_n$ is consequence of (\ref{eq:alphan}). Proposition \ref{affdiffeo} implies the bounds for the diffeomorphisms. The estimation of the Hausdorff dimension is the same as in Theorem 1.4 of \cite{P1}.
\end{proof}
\begin{rem}
The order terms in Proposition \ref{wn} can be refined to have $O(1)=w^{*}+O\left(\alpha_{2n}^{\frac{1}{\ell}}\right)$ where $\ell=\max\left\{\ell_1,\ell_2\right\}$ and $w^*$ depends only on the critical exponents. As a consequence, if the sequence $\left\{R^n f\right\}_{n\in\N}$ is degenerate and $\lambda_u=1$,
\begin{itemize}
\item[-]$w_{n}=-G n\left(E_u+o(1)\right)$ with $G>0$, 
\item[-] $\alpha_{n}=e^{-g{n}\left(e^2_u+e^3_u+o(1)\right)}$ with $g>0$,
\item[-] the non-wandering set $K_f$ has zero Hausdorff dimension.
\end{itemize}
\end{rem}

\end{document}